\documentclass{amsart}
\usepackage{amssymb}
\usepackage{amsmath}
\usepackage{stmaryrd}
\usepackage{color}
\usepackage{comment}
\usepackage{enumitem}
\usepackage[all]{xy}

\newtheorem{prop}{Proposition}[section]
\newtheorem{lem}[prop]{Lemma}
\newtheorem{thm}[prop]{Theorem}
\newtheorem{cor}[prop]{Corollary}

\theoremstyle{remark}
\newtheorem{rem}[prop]{Remark}

\newtheorem{ex}[prop]{Example}

\theoremstyle{definition}
\newtheorem{defi}[prop]{Definition}

\newcommand{\mz}{{\mathbb Z}}

\usepackage{scalerel}

\DeclareMathOperator{\Ch}{Ch}

\DeclareMathOperator{\M}{Mot}
\DeclareMathOperator{\End}{End}
\DeclareMathOperator{\Hom}{Hom}

\DeclareMathOperator{\mult}{mult}
\DeclareMathOperator{\rk}{rk}
\DeclareMathOperator{\SB}{SB}
\DeclareMathOperator{\Spec}{Spec}

\DeclareMathOperator{\Tr}{Tr}

\newcommand{\RM}[2]{\mathrm{Mot}^{\mathrm{KS}}_{#1}({#2})}
\newcommand{\IM}[2]{\mathrm{Mot}^{\mathrm{I}}_{#1}({#2})}

\newcommand{\PGO}{\mathrm{PGO}}

\DeclareMathOperator{\Gal}{Gal}

\newcommand{\CH}{\mathrm{CH}}

\title[]{Higher Tate traces of Chow motives}
\author{Charles De Clercq}
\address{Université Sorbonne Paris Nord, Intitut Galilée, Laboratoire Analyse, Géométrie et Applications, Villetaneuse, France.}
\email{declercq@math.univ-paris13.fr}
\author{Anne Qu\'eguiner-Mathieu}
\address{Université Sorbonne Paris Nord, Intitut Galilée, Laboratoire Analyse, Géométrie et Applications, Villetaneuse, France.}
\email{queguin@math.univ-paris13.fr}
\thanks{2020 \emph{Math. Subject Classification.}
 Primary 14C15, 14M17 ; Secondary 20G15, 11E81, 16K50.}

\begin{document}

\begin{abstract}We establish the complete classification of Chow motives of projective homogeneous varieties for $p$-inner semi-simple algebraic groups, with coefficients in $\mz/p\mz$. Our results involve a new motivic invariant, the Tate trace of a motive, defined as a pure Tate summand of maximal rank. They apply more generally to objects of the Tate subcategory generated by upper motives of irreducible, geometrically split varieties satisfying the nilpotence principle. Using Chernousov-Gille-Merkurjev decompositions and their interpretation through Bialynicki-Birula-Hesselink-Iversen filtrations due to Brosnan, we then generalize the characterization of the motivic equivalence of inner semi-simple groups through the higher Tits $p$-indexes. We also define the motivic splitting pattern and the motivic splitting towers of a summand of the motive of a projective homogeneous variety, which correspond for quadrics to the classical splitting pattern and Knebusch tower of the underlying quadratic form. 
\end{abstract}

\maketitle

\section{Introduction} 

Chow motives provide a powerful tool to study the geometry of varieties without rational points, including projective homogeneous varieties, such as quadrics and Severi-Brauer varieties. In this article we introduce a new family of motivic invariants, the Tate traces, which measures the isotropy level of motives. More precisely, the Tate trace of a motive is a pure Tate direct summand of maximal rank. 

The classification of projective homogeneous varieties up to motivic isomorphism is a long-standing problem, opened more than 30 years ago with the classification over separably closed fields, involving Poincaré polynomials \cite{Kock}. In this work, through the Tate traces, we obtain the classification  over an arbitrary field for the motives with coefficients in $\mz/p\mz$ of projective homogeneous varieties associated to $p$-inner semi-simple algebraic groups, see Theorem~\ref{main.thm}. 
For arbitrary semi-simple groups, the statement is not valid anymore, as explained in Remark~\ref{outer.rem}. 

The theory of motives of projective homogeneous varieties was brought to the forefront of modern algebra in the nineties, with Voevodsky's proof of Milnor's conjecture \cite{voe} which relied on the Rost motive. This inspired an extensive study of the motives of projective quadrics and projective homogeneous varieties by Vishik, Karpenko, Merkurjev and others, leading notably to numerous breakthroughs for classical problems in the algebraic theory of quadratic forms \cite{K-Hoff,Kar-Mer1,vish-lens,vish-uinv}. Among these results is the classification of motives of projective quadrics \cite[Thm. 4.18]{vish-lens} (see \cite[Thm. 93.1]{EKM} for the result in arbitrary characteristic).

\begin{thm}[Vishik's criterion]Let $Q$ and $Q'$ be smooth projective quadrics, given by quadratic forms $q$ and $q'$ of the same dimension over a field $F$.
The motives of $Q$ and $Q'$ (with coefficients in $\mz$ or $\mz/2\mz$) are isomorphic if and only if for all field extensions $L/F$, the Witt indexes of $q_L$ and $q'_L$ are equal.
\end{thm}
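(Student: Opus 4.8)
The plan is to prove the two implications separately: necessity is elementary, while sufficiency carries essentially all of the content.

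\emph{Necessity.} Assume $M(Q)\cong M(Q')$. Base changing along any $E/F$ gives $M(Q_E)\cong M(Q'_E)$, so it suffices to recover $i_W(q_E)$ from the isomorphism class of $M(Q_E)$. Write $d=\dim Q=\dim q-2$ and let $R$ denote either coefficient ring. The motivic decomposition of an isotropic quadric — peel off one hyperbolic pair $R(0)[0]\oplus R(d)[2d]$ together with a Tate twist of the motive of the quadric of the anisotropic kernel, and iterate — expresses the ``split part'' of $M(Q_E)$ as an explicit staircase of Tate motives whose length is determined by $i_W(q_E)$; conversely the presence of such a staircase as a direct summand forces the corresponding lower bound on the Witt index (see \cite[\S 93]{EKM} for both directions, including the even-dimensional split case). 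Since ``being a direct summand'' depends only on the isomorphism class, $i_W(q_E)=i_W(q'_E)$ for all $E$.

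\emph{Sufficiency.} Here I would argue by induction on the height $h$ of the Knebusch splitting tower of $q$. Since the hypothesis says the functions $E\mapsto i_W(q_E)$ and $E\mapsto i_W(q'_E)$ coincide, $q$ and $q'$ have the same splitting pattern; in particular $\dim q=\dim q'$ and $h(q')=h(q)$. If $h=0$ then $q$ and $q'$ have maximal Witt index over $F$ itself, so $Q$ and $Q'$ are split quadrics of the same dimension and $M(Q)\cong M(Q')$ is the split Tate motive (with the middle term doubled when $\dim q$ is even). If $h\ge 1$, put $K=F(Q)$. Then $q_K$ is isotropic with $i_W(q_K)=i_W(q_F)+i_1(q)$, where $i_1(q)\ge 1$ is the first higher Witt index, and by hypothesis $i_W(q'_K)=i_W(q_K)>i_W(q'_F)$, so $q'_K$ is isotropic too. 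Let $q_1,q'_1$ be the anisotropic kernels of $q_K,q'_K$, with associated quadrics $Q_1,Q'_1$ over $K$; they have the same dimension $d-2i_W(q_K)$, and for every $L/K$ one computes $i_W((q_1)_L)=i_W(q_L)-i_W(q_K)=i_W(q'_L)-i_W(q'_K)=i_W((q'_1)_L)$, using the hypothesis over $L$ and over $K$. Hence $(q_1,q'_1)$ satisfies the same hypothesis over $K$ with $h(q_1)=h-1$, and the inductive hypothesis gives $M(Q_1)\cong M(Q'_1)$ over $K$. Combined with the matching hyperbolic Tate parts in the motivic decompositions of $Q_K$ and $Q'_K$, this yields $M(Q_K)\cong M(Q'_K)$; exchanging the roles of $Q$ and $Q'$ gives likewise $M(Q_{F(Q')})\cong M(Q'_{F(Q')})$.

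The hard part will be to descend these isomorphisms over the function fields $F(Q)$ and $F(Q')$ to an isomorphism over $F$ itself. I would phrase this as the existence of correspondences $\phi\in\CH^{d}(Q\times Q')$ and $\psi\in\CH^{d}(Q'\times Q)$ whose base changes to $\overline{F}$ are mutually inverse isomorphisms of the split motive $\bigoplus_j R(j)[2j]$: granting this, the nilpotence principle (Rost) applied to $\psi\circ\phi-\id\in\End(M(Q))$ and $\phi\circ\psi-\id\in\End(M(Q'))$, which both vanish over $\overline{F}$, forces $\phi$ and $\psi$ to be genuine mutual inverses, so $M(Q)\cong M(Q')$. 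Producing $\phi$ and $\psi$ is where the real work lies: one analyses the rationality (definability over $F$) of cycles in $\CH^*(\overline{Q}\times\overline{Q'})$ and $\CH^*(\overline{Q}\times\overline{Q})$, using Rost's description of the Chow groups of products of quadrics together with Vishik's Main Tool Lemma \cite{vish-lens} — which characterises the cycles rational over $F(Q)$ that descend to $F$ — and runs a parallel induction along the splitting tower, keeping track of the ``elementary discrete invariants'', i.e. the bidegrees in which rational cycles occur. It is at this stage that the full force of the hypothesis is consumed: the inductive step over $K=F(Q)$ and the rationality bookkeeping both require equality of Witt indices over \emph{arbitrary} extensions of $K$, not merely along the generic splitting tower — which is precisely why the criterion quantifies over all field extensions.
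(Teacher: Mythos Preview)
Your necessity argument is fine, and your sufficiency outline follows the classical route of Vishik's original proof (as presented in \cite{vish-lens} and \cite[\S 93]{EKM}); with the acknowledged hard step filled in, it is correct. The paper, however, takes a genuinely different path. Rather than analysing rational cycles on $Q\times Q'$ directly, the paper first proves an abstract isomorphism criterion (Theorem~\ref{main.thm}): two objects of the category $\IM{F}{\Lambda}$ generated by shifts of upper motives are isomorphic if and only if their Tate traces agree over every extension. That proof is by induction on \emph{rank}, not height: one isolates the indecomposable summands of hook $0$ in each motive, passes to the function field of one of the underlying varieties to force a Tate summand to appear, and uses the resulting domination relations to build a chain $X_1\succeq_\Lambda Y_{\ell_1}\succeq_\Lambda X_{k_1}\succeq_\Lambda\cdots$ which, by finiteness, must contain two equivalent varieties and hence two isomorphic upper summands; cancel and repeat. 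Vishik's criterion (Corollary~\ref{Vishik}) then drops out in one line from Rost's decomposition, since the Tate trace of $M(Q_E)$ records exactly the Witt index of $q_E$.

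The trade-offs: the paper's argument needs $\Lambda$ to be a finite local ring, so it yields only the $\mz/2\mz$ statement directly, whereas your cycle-theoretic approach also handles $\mz$; on the other hand, the paper's method avoids entirely the rationality bookkeeping you flag as ``where the real work lies'' and applies uniformly to all projective homogeneous varieties under $p$-inner groups, not just quadrics. One structural remark on your outline: the height induction that produces $M(Q_K)\cong M(Q'_K)$ over $K=F(Q)$ does not itself feed into the descent---an isomorphism over $K$ gives you no rational cycle over $F$---so the genuine work is entirely in the ``parallel'' cycle-level induction you sketch at the end, and the first induction becomes redundant once that is in place.
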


Inspired by one of the main tools used by Vishik, the \v{C}ech simplicial schemes, Karpenko introduced in \cite{K-inner} their avatars for projective homogeneous varieties, the upper motives. Upper motives are easier to handle than motives of varieties, yet they encode deep geometric information, such as their canonical $p$-dimension \cite{K-ICM}. They were used by Karpenko to classify motives of Severi-Brauer varieties (see \cite{dC,KZ} for other applications). The classification of motives for more general flag varieties of inner type $A_n$ is given by \cite[Thm 19]{dC}.

 Criteria for motivic isomorphisms were achieved in several other cases such as Borel varieties \cite{PSZ}, involution varieties \cite{dCQZ} and some exceptional projective homogeneous varieties \cite{GPS,NSZ,PS}. However, all the above cases deal with projective homogeneous varieties of the same type, and with respect to semi-simple groups with the same Dynkin diagram. As opposed to this, the criterion given in theorem \ref{main.thm} compares the motives with $\mz/p\mz$ coefficients of two arbitrary projective homogeneous varieties for two $p$-inner semi-simple algebraic groups of arbitrary and possibly different types. Moreover, the proof does not rely on the classification of semi-simple groups. The class of $p$-inner semi-simple groups is quite large: all absolutely simple groups are $p$-inner for some prime $p$, except the trialitarian groups of type $^6\!D_4$. Products of $p$-inner semi-simple groups are $p$-inner, as well as Weyl restrictions of such, with respect to separable extensions of degree a power of $p$. Note that considering direct summands of motives of projective quadrics in Voevodsky's category of motives $\mathrm{DM}(F;\mz/2\mz)$, Tate traces corresponds to Vishik's isotropic motivic realization \cite{vishik:isotropic} and are given by Bachmann's generalized geometric fixed point functor \cite{bachmann,bachmannvishik}.

We actually show more: from Theorem \ref{main.thm}, higher Tate traces characterize isomorphism classes of objects in the additive category $\IM{F}{\mz/p\mz}$ generated by shifts of upper motives of geometrically split irreducible varieties satisfying the nilpotence principle (Definition \ref{definit}). The result thus applies to arbitrary direct summands of projective homogeneous varieties for $p$-inner semi-simple groups, our main focus in this article, but actually holds for a broader class of motives, containing Artin-Tate motives with respect to separable $p$-primary field extensions, motives of some projective pseudo-homogeneous varieties, and much more. Our results therefore highlight the question of determining the varieties suited for the theory of upper motives, that is, the varieties with motive isomorphic to a direct sum of initial motives.\\

We derive several applications from Theorem \ref{main.thm}. 
\begin{enumerate}
\item Computing the Tate trace of tensor products of motives, we prove a cancellation property in the category $\IM{F}{\mz/p\mz}$ (Corollary \ref{produits}).
\item As Tate traces of projective homogeneous varieties for $p$-inner groups are determined by the Tits indexes of the underlying semi-simple group, we show that their motives are determined by cohomological invariants in numerous situations (Corollaries \ref{critborel} and \ref{cohinv}).
\item Considering $G$, $G'$ two inner semi-simple groups of the same type, we relate motivic isomorphisms of projective homogeneous varieties for $G$ and $G'$ and their Tits indexes over field extensions (Theorem \ref{motequivpart}). This generalizes the main result of \cite{dC}, expressing motivic equivalence of semi-simple algebraic groups through their higher Tits $p$-indexes. The proof of Theorem \ref{motequivpart} relies on the theory of upper motives, Chernousov-Gille-Merkurjev motivic decompositions \cite{CGM} and their interpretation through Bialynicki-Birula-Hesselink-Iversen filtrations due to Brosnan \cite{Bros}. 
\item We introduce a notion of motivic splitting pattern and motivic splitting towers (Definition~\ref{splittingpattern.def}), which comprises the classical splitting pattern and Knebusch's tower of quadratic forms~\cite{Kne1}. 
\end{enumerate}

\textbf{Acknowledgments.} We are grateful to Skip Garibaldi, Stefan Gille, Nikita Karpenko, Srimathy Srinivasan and Alexander Vishik for their comments on a preliminary version of this paper, and insightful discussions.

\subsection{Notation} 
Let $F$ be a field and $\Lambda$ a commutative ring. Varieties over $F$, that is separated schemes of finite type over $\Spec F$, are always assumed to be smooth and projective. We denote by $\M_F(\Lambda)$ the category of Chow motives with coefficients in $\Lambda$, and use the notation $M(X)$ for the motive of a smooth complete scheme over $F$. 
Except in a few places where we consider motives with coefficients in $\mz$, we work with $\mz/p\mz$ coefficients for some prime number $p$, and we let $\Ch_{\ast} (\cdot):=\CH_{\ast}(\cdot)\otimes_{\mathbb{Z}}\mz/p\mz$ be the corresponding Chow groups. Our main reference for the basics of Chow groups and Chow motives is \cite{EKM}.

\section{Preliminary results} 
\label{prel.sec}

A {\em pure Tate motive} in $\M_F(\Lambda)$ is a direct sum of Tate motives, that is $M=\Sigma_{i\in I} \Lambda\{k_i\}$ for a finite set of integers $\{k_i,\ i\in I\}\subset \mz^I$, where $\Lambda\{k\}$ stands for the motive $M(\Spec F)\{k\}$. A motive $M\in\M_F(\Lambda)$ is called {\em geometrically split} if there exists a field extension $E/F$ such that $M_E$ is a pure Tate motive.
A variety $X$ over $F$ is called {\em geomerically split} with coefficients in $\Lambda$ if its motive $M(X)\in\M_F(\Lambda)$ is geometrically split. An irreducible variety $X$ over $F$ is called {\em generically split} if $M(X)_{F(X)}$ is a pure Tate motive. For instance, projective homogeneous varieties under the action of a semi-simple affine algebraic group $G$ are geometrically split by~\cite{Kock} for any ring of coefficients $\Lambda$; see also~\cite[66.5,\,66.7]{EKM} for explicit examples.

Let $M\in\M_F(\Lambda)$ be a motive. A decomposition of $M$ as a direct sum of indecomposable objects is called a \textit{complete} motivic decomposition of $M$. We say that the {\em Krull-Schmidt property} holds for $M$ if any decomposition of $M$ can be refined into a complete one, and $M$ admits a unique complete decomposition, up to permutation and isomorphism of the summands. 
By~\cite{vish-lens}, the Krull-Schmidt property holds for $M(X)\in\M_F(\mz)$ if $X$ is a projective quadric. Nevertheless, it does not hold in general for motives with coefficients in $\mz$, even for projective homogeneous varieties ; see~\cite[Ex. 32]{CM} for an explicit example using Severi-Brauer varieties. Therefore, we consider Chow motives with coefficients in $\mz/p\mz$ for some prime number $p$. In this setting, the Krull-Schmidt property holds for a large class of varieties, as we proceed to recall. 

We say that a variety $X$ satisfies the {\em nilpotence principle} if for any ring of coefficients $\Lambda$, an endomorphism $f$ of the motive $M(X)$ which maps to zero in $\End(M(X_L))$ for some field extension $L/F$ is nilpotent, see~\cite{Rost}. For example, projective homogeneous varieties under the action of an affine semi-simple algebraic group satisfy the nilpotence principle~\cite[Thm 8.2]{CGM}. 

By~\cite[\S 2.I]{K-inner}, if $X$ is geometrically split and satisfies the nilpotence principle, then the Krull-Schmidt property holds for $M(X)\in\M_F(\mz/p\mz)$. More precisely, we have the following: 
\begin{prop} \cite[Cor. 2.6]{K-inner}
\label{KS}
Let $\RM{F}{\mz/p\mz}$ be the full additive subcategory of $\M_F(\mz/p\mz)$ generated by direct  summands of shifts of motives of geometrically split varieties satisfying the nilpotence principle. The Krull-Schmidt property holds for all objects in $\RM{F}{\mz/p\mz}$.
 \end{prop}

\begin{proof} We reproduce the argument for the reader's convenience. By definition, all objects of $\RM{F}{\mz/p\mz}$ are finite sums of indecomposable ones, and an indecomposable object is isomorphic to 
$M\{i\}$, for some $i\in\mz$, and some indecomposable summand $M$ of $M(X)$, where $X$ is a geometrically split variety satisfying the nilpotence principle. Therefore, by~\cite[Thm 3.6 of Chapter I]{Bass}, we need to prove that the endomorphism rings of indecomposable objects are local rings. So, consider an indecomposable motive $M\{i\}$ as above and let $\pi\in\End(M(X))$ be the corresponding projector. We have \[\End(M\{i\})=\End(M)=\pi\End\bigl(M(X)\bigr)\pi\subset \End\bigl(M(X)\bigr).\] 
In particular, by~\cite[Cor. 2.2]{K-inner}, since we work with finite coefficients, for any $f\in\End(M\{i\})$, there exists a non-zero integer $n$ such that $f^n$ is a projector. As $M\{i\}$ is indecomposable, it follows that $f^n\in\{0,1\}$. Therefore, all non-invertible elements of $\End(M\{i\})$ are nilpotent, hence it is a local ring. 
\end{proof} 

Assume now that $X$ and $Y$ are two irreducible varieties of respective dimensions $d$ and $d'$. By definition, a morphism $M(X)\rightarrow M(Y)$ in the category $\M_F(\mz/p\mz)$ is a correspondence $\alpha:X \rightsquigarrow Y$, that is a cycle $\alpha\in \Ch_{d}(X\times Y)$. The projection $p_1$ on the first component induces a morphim ${p_1}_*:\,\Ch_{d}(X\times Y)\mapsto \Ch_{d}(X)\simeq \mz/p\mz \cdot [X]$. The multiplicity $\mult(\alpha)\in\mz/p\mz$ of $\alpha$ is defined by ${p_1}_*(\alpha)=\mult(\alpha)\cdot[X]$. 

A composition of correspondences $\alpha: X\rightsquigarrow Y$ and 
$\beta: Y\rightsquigarrow Z$ has multiplicity $\mult(\beta\circ \alpha)=\mult(\beta)\mult(\alpha)$, see~\cite[Cor. 1.7]{K-anisotropy}. 
In particular, the multiplicity of a projector $\pi:X\rightsquigarrow X$ is an idempotent of $\mz/p\mz$, hence $0$ or $1$. As explained in~\cite[\S 2.II]{K-inner}, if the diagonal $\Delta_X$ decomposes as a sum of pairwise orthogonal projectors, then one of them, called upper, has multiplicity $1$ and all others have multiplicity $0$. It follows that in a given complete decomposition of $M(X)$, there is a unique indecomposable summand defined by a projector of mutiplicity $1$. If in addition $X$ is geometrically split and satisfies the nilpotence principle, then $M(X)$ satisfies the Krull-Schmidt property, so that up to isomorphism, this summand does not depend on the choice of a decomposition. The (isomorphism class) of this motive is then called the \textit{upper motive} of $X$, and denoted by $U_X$.

The variety $X$ is called {\em isotropic mod $p$} if $X$ admits a $0$-cycle with $\mz$ coefficients of prime to $p$ degree, or, equivalently, a $0$-cycle with $\mz/p\mz$ coefficients of degree $1$. 
Such a cycle $\alpha\in\Ch_0(X)$ may also be viewed as a correspondence $\Spec F\rightsquigarrow X$ of multiplicity $1$. We get the following characterisation of isotropy: 
\begin{lem}
\label{isotropy}
Let $X$ be an irreducible geometrically split variety. 
The following are equivalent : 
\begin{enumerate} 
\item[(i)] $X$ is isotropic mod $p$; 
\item[(ii)] $M(X)$ contains a summand isomorphic to $\mz/p\mz\{0\}$;
\item[(iii)] $M(X)$ contains a summand isomorphic to $\mz/p\mz\{k\}$ for some $k\in\mz$. 
\end{enumerate} 
\end{lem}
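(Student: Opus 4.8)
The plan is to establish the cycle of implications $(i)\Rightarrow(ii)\Rightarrow(iii)\Rightarrow(i)$, translating each statement into the language of correspondences set up above.

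For $(i)\Rightarrow(ii)$, view a $0$-cycle $\alpha\in\Ch_0(X)$ of invertible degree $\delta\in\Lambda^\times$ as a morphism $a\colon\Lambda\{0\}=M(\Spec F)\to M(X)$, and let $b\colon M(X)\to\Lambda\{0\}$ be the morphism given by the fundamental class $[X]\in\Ch_{\dim X}(X)=\Ch_{\dim X}(X\times\Spec F)$, i.e.\ push-forward along the structure map. Using the composition rule for correspondences one checks that $b\circ a=\delta\cdot\id_{\Lambda\{0\}}$: the composite lies in $\Ch_0(\Spec F)=\Lambda$ and equals $\deg(\alpha)$. Since $\delta$ is a unit, $a$ is a split monomorphism with retraction $\delta^{-1}b$, so $\Lambda\{0\}$ is isomorphic to the direct summand of $M(X)$ cut out by the idempotent $\delta^{-1}(a\circ b)\in\End(M(X))$, using that $\M_F(\Lambda)$ is pseudo-abelian. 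This is $(ii)$.

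The implication $(ii)\Rightarrow(iii)$ is trivial, taking $k=0$. For $(iii)\Rightarrow(i)$, assume $M(X)$ has $\Lambda\{k\}$ as a direct summand, witnessed by morphisms $a\colon\Lambda\{k\}\to M(X)$ and $b\colon M(X)\to\Lambda\{k\}$ with $b\circ a=\id_{\Lambda\{k\}}$. Unwinding the definition of the morphism groups in $\M_F(\Lambda)$, the morphism $a$ is a cycle class in $\Ch_k(X)$, the morphism $b$ is a cycle class in $\Ch_{\dim X-k}(X)$, and $\id_{\Lambda\{k\}}$ corresponds to $1\in\Ch_0(\Spec F)=\Lambda$. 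The composition rule then identifies $b\circ a$ with $\deg(a\cdot b)$, where $a\cdot b\in\Ch_0(X)$ is the intersection product. Hence $a\cdot b$ is a $0$-cycle on $X$ of degree $1$, so $X$ is isotropic with coefficients in $\Lambda$, which is $(i)$.

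I do not expect a genuine obstacle: the content of the lemma is exactly this dictionary between $0$-cycles on $X$ and morphisms to and from the Tate motives $\Lambda\{k\}$, and the only point needing care is the bookkeeping of the twists and of the dimensions of the Chow groups entering the motivic Hom-groups and the composition law, following the conventions of \cite{EKM}. Irreducibility of $X$ is used only to identify $\Ch_{\dim X}(X)$ with $\Lambda\cdot[X]$, so that multiplicities and degrees are genuine scalars; the geometric splitness hypothesis, included here for consistency with the rest of the paper, is not actually used in the argument.
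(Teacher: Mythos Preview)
Your proof is correct and follows essentially the same route as the paper's: both argue the cycle $(i)\Rightarrow(ii)\Rightarrow(iii)\Rightarrow(i)$ by interpreting a $0$-cycle of invertible degree as a split monomorphism $\Lambda\{0\}\hookrightarrow M(X)$ via the pair $(\alpha,[X])$, and conversely recover a degree-$1$ $0$-cycle from a Tate summand $\Lambda\{k\}$ as the intersection product of the two correspondences realizing it. Your closing remark that geometric splitness is not used is also accurate and worth keeping.
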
 
\begin{proof} 
Assume $X$ is isotropic mod $p$ and let $\alpha\in\Ch_0(X)$ be a cycle with $\deg(\alpha)=1$. 
Consider the product $[X]\times\alpha\in \Ch_d(X\times X)$. Viewed as a correspondence $X\rightsquigarrow X$, one may check it is idempotent, so it defines a summand $(X,[X]\times \alpha)$ of $M(X)$. Moreover, its multiplicity is $\mult([X]\times\alpha)=\deg(\alpha)=1$. The correspondences $\alpha:\Spec F\rightsquigarrow X$ and $[X]:X\rightsquigarrow \Spec F$ induce isomorphisms between $(X,[X]\times\alpha)$ and $\mz/p\mz\{0\}$, and we get (ii). 
The implication $(ii)\,\Rightarrow\,(iii)$ is obvious, and it remains to prove $(iii)\,\Rightarrow\,(i)$. 
Assume $M(X)$ contains a summand $(X,\pi)$ isomorphic to $\mz/p\mz\{k\}$, and let $\alpha:\, \mz/p\mz\{k\}\rightsquigarrow X$ and $\beta:X\rightsquigarrow \mz/p\mz\{k\}$ be correspondences inducing an isomorphism between $(X,\pi)$ and $\mz/p\mz\{k\}$. 
We have \[\alpha\in\Ch_k (X)\simeq \Ch^{d-k}(\Spec F\times X)\mbox{ and }\beta\in\Ch_{d-k}(X)\simeq \Ch^{k}(X\times \Spec F).\] Moreover,  the intersection product of their pull-backs to $\Spec F\times X\times \Spec F$, along the projections $p_{12}$ and $p_{23}$, respectively, is a cycle $\gamma\in\Ch^d(X)\simeq \Ch_0(X)$. 
Since $\beta\circ\alpha=1\in\Ch_0(\Spec F)\simeq\mz/p\mz$, the $0$-cycle $\gamma$ has degree $1$ and this proves that $X$ is isotropic mod $p$. 
\end{proof} 

Let $X$ and $Y$ be two irreducible varieties. There is a correspondence $\alpha:X\rightsquigarrow Y$ of multiplicity $1$ if and only if $Y_{F(X)}$ is isotropic mod $p$, see for instance~\cite[Cor. 57.11, Lem. 75.1]{EKM}. When these conditions are satisfied, we say that  $X$ {\em dominates $Y$ mod $p$}, and we write $X\succeq_p Y$, see~\cite[Def. 4]{dC}. 
The domination relation is a preorder and the associated equivalence relation is called {\em equivalence mod $p$} and denoted by $\approx_p$. 
\begin{ex}
Assume $X$ and $Y$ are quadrics. By Springer's theorem~\cite[Cor. 71.3]{EKM}, $Y_{F(X)}$ is isotropic mod $2$ if and only if there is a rational map $X\dashrightarrow Y$. Hence, for quadrics, the domination relation mod $2$ coincides with the relation considered by Bruno Kahn in~\cite[\S I.7]{Kahn-Bourbaki}, and the corresponding equivalence relation is the stable birational equivalence. Therefore, we may view the equivalence relation mod $p$ as a motivic version of stable birational equivalence. 

\end{ex} 
By definition, $X\approx_p Y$ if and only if there are correspondences $X\rightsquigarrow Y$ and $Y\rightsquigarrow X$ of multiplicity $1$, or equivalently, $X_{F(Y)}$ and $Y_{F(X)}$ are isotropic mod $p$. 
For the varieties we are interested in, the following result, which is due to Karpenko, provides another characterization in terms of upper motives: 
\begin{lem}\cite[Cor. 2.15]{K-inner}
\label{upper.lem}
Let $X$ and $Y$ be two irreducible geometrically split $F$-varieties satisfying the nilpotence principle. The varieties $X$ and $Y$ are equivalent mod $p$ if and only if the upper motives $U_X$ and $U_Y$ are isomorphic in $\M_F(\mz/p\mz)$. 
 \end{lem}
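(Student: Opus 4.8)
The plan is to translate the equivalence relation $X\approx_\Lambda Y$ into statements about multiplicities of correspondences, and then to read off an isomorphism of upper motives using the Krull-Schmidt property. First I would recall that by the characterization just before the statement, $X\approx_\Lambda Y$ is equivalent to the existence of correspondences $\alpha\colon X\rightsquigarrow Y$ and $\beta\colon Y\rightsquigarrow X$, each of multiplicity $1$. Since $X$ and $Y$ are geometrically split and satisfy the nilpotence principle, both $M(X)$ and $M(Y)$ lie in $\RM{F}{\Lambda}$, so by Proposition \ref{KS} the Krull-Schmidt property holds for each; in particular $M(X)$ has a well-defined indecomposable upper summand $U_X$ (cut out by a projector $\pi_X$ of multiplicity $1$) and similarly for $U_Y$, and these are unique up to isomorphism.

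The key step is to manufacture, from $\alpha$ and $\beta$, a pair of correspondences between $U_X$ and $U_Y$ whose composites are projectors of multiplicity $1$. Concretely, consider $\beta\circ\pi_Y\circ\alpha\in\End(M(X))$: its multiplicity is $\mult(\beta)\mult(\pi_Y)\mult(\alpha)=1$ by the multiplicativity of multiplicity (\cite[Cor. 1.7]{K-anisotropy}). By \cite[Cor. 2.2]{K-inner}, some positive power $(\beta\circ\pi_Y\circ\alpha)^n$ is a projector, and it again has multiplicity $1$; hence $\pi_X\circ(\beta\circ\pi_Y\circ\alpha)^n\circ\pi_X$ is an endomorphism of $U_X$ of multiplicity $1$, and since $\End(U_X)$ is local (its only idempotents are $0$ and $\id$, the latter being the one of multiplicity $1$), a further power of it equals $\id_{U_X}$. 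Chasing the factorization, this exhibits $\id_{U_X}$ as factoring through $U_Y$: there are morphisms $U_X\to U_Y$ and $U_Y\to U_X$ whose composite is $\id_{U_X}$. Symmetrically, swapping the roles of $X$ and $Y$, one gets $\id_{U_Y}$ factoring through $U_X$. One then applies the standard argument that two indecomposable objects with local endomorphism rings, each of which is a retract of the other, are isomorphic — alternatively, deduce directly that the composite $U_Y\to U_X\to U_Y$, being an idempotent up to a power and non-nilpotent, must be $\id_{U_Y}$, so the two factoring maps are mutually inverse isomorphisms.

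For the converse, suppose $U_X\simeq U_Y$. Then $U_X$ is (up to a shift) a summand of both $M(X)$ and $M(Y)$; but the upper summand carries the multiplicity-$1$ projector with no shift, so in fact $U_X\simeq U_Y$ as summands of $M(X)$ and $M(Y)$ with the trivial twist. Composing the inclusion $U_Y\hookrightarrow M(Y)$, an isomorphism $U_Y\xrightarrow{\sim}U_X$, and the projection $M(X)\twoheadrightarrow U_X$ backwards and forwards produces correspondences $X\rightsquigarrow Y$ and $Y\rightsquigarrow X$ which, being built from multiplicity-$1$ projectors and isomorphisms, have multiplicity $1$; hence $X\succeq_\Lambda Y$ and $Y\succeq_\Lambda X$, i.e. $X\approx_\Lambda Y$.

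The main obstacle is the bookkeeping in the forward direction: ensuring that the projectors produced by taking powers (via the ``eventually a projector'' statement \cite[Cor. 2.2]{K-inner}) are genuinely the \emph{upper} projectors and not some other multiplicity-$1$ idempotent in a non-Krull-Schmidt situation — this is exactly where geometric splitness and the nilpotence principle are used, through Proposition \ref{KS}, to guarantee that multiplicity $1$ pins down $U_X$ up to isomorphism. Once the Krull-Schmidt property is in hand, the argument is a formal retract-chase; without it, the statement can fail.
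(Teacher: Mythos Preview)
The paper does not supply its own proof of this lemma; it is stated with a citation to \cite[Cor.~2.15]{K-inner} and used as a black box. Your argument is correct and is essentially the standard one: in the forward direction you combine multiplicativity of the multiplicity, the ``eventually idempotent'' property \cite[Cor.~2.2]{K-inner}, and locality of $\End(U_X)$ (from Proposition~\ref{KS}) to factor $\id_{U_X}$ through $U_Y$, then conclude by the retract argument for indecomposables with local endomorphism rings; the converse is read off directly from multiplicities.

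One small point in your converse: an isomorphism $\phi\colon U_X\xrightarrow{\sim}U_Y$, viewed as a correspondence $X\rightsquigarrow Y$, has multiplicity a \emph{unit} of $\Lambda$ (since $\mult(\phi)\mult(\phi^{-1})=\mult(\pi_Y)=1$), not necessarily $1$ on the nose. This is harmless, since one may rescale by the inverse unit to obtain a multiplicity-$1$ correspondence; alternatively, recall from the paper's definition that isotropy with coefficients in $\Lambda$ already allows $0$-cycles of invertible degree.
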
 

\section{Tate trace and initial motives} 

Let $M$ be a motive in $\RM{F}{\mz/p\mz}$. 
By definition, $M$ is geometrically split; a field $E/F$ such that $M_E$ is a pure Tate motive is called a splitting field of $M$. 
The decomposition as a sum of Tate motives of $M_E\in\RM{E}{\mz/p\mz}$ does not depend on the choice of a splitting field $E$. Hence, we may give the following definition:

\begin{defi} \begin{enumerate}
\item The rank $\rk(M)$ of a motive $M\in\RM{F}{\mz/p\mz}$ is the rank of $\Ch_*(M_E)$ as a $\mz/p\mz$-module, where $E/F$ is a splitting field for $M$. 
\item The hook $h(M)$ of a non-trivial motive $M\in\RM{F}{\mz/p\mz}$ is the smallest integer $i\in\mz$ such that $\Ch^i(M_E)$ is non-zero, where $E/F$ is a splitting field for $M$. 
\end{enumerate}
\end{defi}
\begin{ex}
\label{Taterankhook} 
Assume $T$ is a non-trivial pure Tate motive. The rank of $T$ is the number of Tate summands in a complete decomposition of $T$, while its hook $h(T)$ is the smallest integer $i$ such that $\mz/p\mz\{i\}$ is a direct summand of $T$. This follows from the fact that $\Hom_{\M_F(\mz/p\mz)}(M,\mz/p\mz\{i\})=\Ch^i(M)$ and 
\[\Hom_{\M_F(\mz/p\mz)}(\mz/p\mz\{k\},\mz/p\mz\{i\})\mbox{ is }0\mbox{ if }k\not =i\mbox{ and }\mz/p\mz\mbox{ otherwise}.\]
\end{ex}
\begin{ex}
Let $X$ be an irreducible geometrically split variety satisfying the nilpotence principle. The hook of the motive of $X$ is $h\bigl(M(X)\bigr)=0$.  
Moreover, we also have $h(U_X)=0$, where $U_X$ is the upper motive of $X$, see~\cite[Lem. 2.8]{K-inner}. 
\end{ex}
\begin{ex}
\label{upperhook}
Given two motives $M, N \in\RM{F}{\mz/p\mz}$, one may easily check that
\begin{multline*}
\rk(M+N)=\rk(M)+\rk(N),\ h(M+N)=\min\{h(M),h(N)\},\\\rk(M\otimes N)=\rk(M)\rk(N)\mbox{  and }h(M\otimes N)=h(M)+h(N).
\end{multline*}
In particular, for all $i\in\mz$, we have $h(M\{i\})=h(M)+i$, and for all irreducible geometrically split variety $X$ satisfying the nilpotence principle, $h(U_X\{i\})=i$. 
\end{ex}
Let $M\in\RM{F}{\mz/p\mz}$ be a motive.  By the Krull-Schmidt property, see Prop~\ref{KS}, two pure Tate summands of $M$ of maximal rank are isomorphic. Therefore, we may give the following definition: 
\begin{defi}
A pure Tate motive is called the Tate trace of $M\in\RM{F}{\mz/p\mz}$ if it is isomorphic to a pure Tate summand of maximal rank of $M$. The Tate trace of $M$ is uniquely defined up to isomorphism and denoted by $\Tr(M)$. 
\end{defi}
\begin{ex}
\label{TateVariety.ex}
A motive $M$ is a pure Tate motive if and only if $\Tr(M)\simeq M$. \\
By Lemma~\ref{isotropy}, if $X$ is an irreducible geometrically split variety satisfying the nilpotence principle, then $\Tr\bigl(M(X)\bigr)\not\simeq 0$ if and only if $X$ is isotropic mod $p$. In particular, the Tate trace does not commute with scalar extension. 
\end{ex}
\begin{ex}
\label{qf.ex}
Let $q=i_0{\mathbb {H}}+q_{\mathrm {an}}$ be a $2m$-dimensional quadratic form with Witt index $i_0$ and anisotropic part $q_{an}$. Denote by $Q$ and $Q_{\mathrm{an}}$ the corresponding projective quadrics. By~\cite{Rost}, the motive $M(Q)\in\M_F(\mz/2\mz)$ satisfies: 
\begin{multline*}
    M(Q)\simeq \mz/2\mz\{0\}+\dots+\mz/2\mz\{i_0-1\}+M(Q_{\mathrm{an}})\{i_0\}\\
+\mz/2\mz\{2m-1-i_0\}+\dots+\mz/2\mz\{2m-2\}.
\end{multline*}
By Springer's theorem~\cite[71.3]{EKM}, $Q_{\mathrm{an}}$ is anisotropic mod $2$ so that $M(Q_{\mathrm{an}})$ has trivial Tate trace. Therefore, the Tate trace of $M(Q)$ is given by \[\Tr(M(Q))\simeq\sum_{k=0}^{i_0-1} \bigl(\mz/2\mz\{k\}+\mz/2\mz\{2m-2-k\}\bigr).\]
\end{ex}

\begin{defi}
  A motive $M$ in $\RM{F}{\mz/p\mz}$ is called isotropic if it has a non-trivial Tate trace, and anisotropic otherwise.
\end{defi}

By the Krull-Schmidt property, $M$ decomposes in a unique way as $M=\Tr(M)+M_{an}$ for some anisotropic motive $M_{an}$ uniquely defined up to isomorphism, called the anisotropic part of $M$.
The Tate trace of a motive measures its splitting. 
\begin{ex}
\label{oi.ex}
Assume $F$ has characteristic different from $2$ and let $A$ be an $F$-central simple algebra endowed with an orthogonal involution $\sigma$. The Witt index $i_w(\sigma)$ of the involution $\sigma$ is the reduced dimension of a maximal isotropic right ideal in $A$~\cite[\S 6.A]{KMRT}. 
The variety of isotropic right ideals of reduced dimension $1$ is called the involution variety and denoted by $X_\sigma$. It is a $\PGO^+(A,\sigma)$-projective homogeneous variety, hence $M(X_\sigma)\in\RM{F}{\mz/2\mz}$. 
We claim that its Tate trace is given by
\[\Tr(M(X_\sigma))\simeq\left\{\begin{array}{ll}
0&\mbox{ if $A$ is non split}, \\
\sum_{k=0}^{i_w(\sigma)-1} \bigl(\mz/2\mz\{k\}+\mz/2\mz\{2m-2-k\}\bigr)&\mbox{ if $A$ is split.} 
\end{array}\right.\]
Indeed, the algebra $A$ contains a right ideal of reduced dimension $1$ if and only if it is split. Moreover, since $A$ admits an orthogonal involution, it has exponent $2$ and its Schur index is invariant under any odd degree extension of the base field. Therefore, if $A$ is non-split, then the variety $X_\sigma$ is anisotropic mod $2$ and this proves the first equality. 
Assume now that $A$ is split. Then $\sigma$ is adjoint to a quadratic form $q$ over $F$ and it follows from the definitions that $X_\sigma$ is isomorphic to the corresponding quadric $Q$ and $i_w(\sigma)$ is equal to the Witt index of $q$. Hence the second equality follows from Example~\ref{qf.ex}.  
\end{ex}
We may characterize the domination relation mod $p$ in terms of Tate traces as follows: 
\begin{prop}
Let $X$ and $Y$ be two irreducible geometrically split varieties satisfying the nilpotence principle. Then $X\succeq_p Y$ if and only if $\Tr\bigl(M(Y_L)\bigr)\not\simeq 0$ for all field extensions $L/F$ such that $\Tr\bigl(M(X_L)\bigr)\not\simeq0$. 
\end{prop}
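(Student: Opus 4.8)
The plan is to translate both sides of the equivalence into statements about isotropy and then close the argument using the multiplicativity of multiplicities of correspondences. First I would observe that for $X$ and $Y$ irreducible geometrically split varieties satisfying the nilpotence principle and any field extension $E/F$, the base changes $X_E$ and $Y_E$ are still geometrically split, satisfy the nilpotence principle, and are equidimensional; hence $M(X_E),M(Y_E)\in\RM{E}{\Lambda}$ and, by Example~\ref{TateVariety.ex} (which rests on Lemma~\ref{isotropy}), $\Tr(M(X_E))\not\simeq0$ if and only if $X_E$ is isotropic with coefficients in $\Lambda$, and likewise for $Y$. Thus the right-hand side of the statement becomes: for every $E/F$, isotropy of $X_E$ with coefficients in $\Lambda$ implies isotropy of $Y_E$. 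On the other hand, by the definition of $\succeq_\Lambda$ together with~\cite[Cor. 57.11, Lem. 75.1]{EKM}, the relation $X\succeq_\Lambda Y$ is equivalent to the existence of a correspondence $\alpha\colon X\rightsquigarrow Y$ of multiplicity $1$, and also to the isotropy of $Y_{F(X)}$ with coefficients in $\Lambda$. So the proposition reduces to the equivalence between $Y_{F(X)}$ being isotropic with coefficients in $\Lambda$ and the above implication holding for all $E/F$.

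For the implication ``$\Leftarrow$'', I would simply apply the hypothesis to $E=F(X)$. The generic point of $X$ is an $F(X)$-rational point of $X_{F(X)}$, so $X_{F(X)}$ is isotropic with coefficients in $\Lambda$ and $\Tr(M(X_{F(X)}))\not\simeq0$; the hypothesis then yields $\Tr(M(Y_{F(X)}))\not\simeq0$, i.e. $Y_{F(X)}$ is isotropic with coefficients in $\Lambda$, which by definition is precisely $X\succeq_\Lambda Y$.

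For ``$\Rightarrow$'', assume $X\succeq_\Lambda Y$ and fix a correspondence $\alpha\colon X\rightsquigarrow Y$ with $\mult(\alpha)=1$. Given any $E/F$ with $\Tr(M(X_E))\not\simeq0$, pick a $0$-cycle $\gamma\in\Ch_0(X_E)$ of degree $1$ and regard it as a correspondence $\Spec E\rightsquigarrow X_E$ of multiplicity $1$. Since multiplicity is preserved under scalar extension, the base change $\alpha_E\colon X_E\rightsquigarrow Y_E$ again has multiplicity $1$, so by~\cite[Cor. 1.7]{K-anisotropy} the composite $\alpha_E\circ\gamma\colon\Spec E\rightsquigarrow Y_E$ has multiplicity $1$; it is a $0$-cycle on $Y_E$ of degree $1$, whence $Y_E$ is isotropic with coefficients in $\Lambda$ and $\Tr(M(Y_E))\not\simeq0$. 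This establishes the right-hand side.

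The step most likely to require care is the first-paragraph reduction, specifically verifying that the equivalence ``non-trivial Tate trace $\Leftrightarrow$ isotropic with coefficients in $\Lambda$'' of Example~\ref{TateVariety.ex} survives base change to $E$, since $X_E$ and $Y_E$ need not remain irreducible; this is handled by noting that they stay equidimensional and geometrically split, so that the $0$-cycle computation in the proof of Lemma~\ref{isotropy} goes through component by component. Everything else is routine bookkeeping: the multiplicativity of multiplicities~\cite[Cor. 1.7]{K-anisotropy} and their invariance under scalar extension do all the remaining work.
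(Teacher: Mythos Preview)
Your proof is correct and follows essentially the same route as the paper's: both directions are argued via the equivalence between non-trivial Tate trace and isotropy with coefficients in $\Lambda$ (Example~\ref{TateVariety.ex}), the forward direction by composing a degree-$1$ zero-cycle with the multiplicity-$1$ correspondence $\alpha$, and the converse by specializing to $E=F(X)$. Your version is slightly more explicit in base-changing $\alpha$ to $\alpha_E$ before composing (which is the rigorous reading of the paper's looser phrase ``any correspondence $Z\rightsquigarrow X$''), and you flag the irreducibility issue for $X_E$ that the paper passes over silently; neither point changes the argument.
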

\begin{proof}
Assume first that $X\succeq_p Y$. By definition there exists a correspondence $\alpha:X \rightsquigarrow Y$ of multiplicity $1$. Hence, any correspondence $Z \rightsquigarrow X$ provides, by composition with $\alpha$, a correspondence $Z\rightsquigarrow Y$ of the same multiplicity. 
So, $Y_L$ is isotropic mod $p$ for all $L/F$ such that $X_L$ is isotropic mod $p$. By example~\ref{TateVariety.ex}, this proves the first implication.  
To prove the converse, it suffices to consider the field $L=F(X)$. Since $X_{F(X)}$ is isotropic mod $p$, the corresponding motive $M(X_{F(X)})$ has non-zero Tate trace. Therefore $\Tr\bigl(M(Y_{F(X)})\bigr)\not=0$ and this proves $Y_{F(X)}$ is isotropic mod $p$ and  $X\succeq_p Y$, which finishes the proof. 
\end{proof} 
We now extend the domination relation to objects of $\RM{F}{\mz/p\mz}$. 
\begin{defi} Let $M,N\in\RM{F}{\mz/p\mz}$ be two motives. 
\begin{enumerate} 
\item We say that $M$ dominates $N$, and we write $M\succeq N$, if $\Tr(N_L)\not\simeq 0$ for all field extension $L/F$, such that $\Tr(M_L)\not\simeq 0$. 
\item We say that $M$ is equivalent to $N$, and we write $M\approx N$ if $M\succeq N$ and $N\succeq M$. 
\end{enumerate} 
\end{defi}
Two motives are equivalent if and only if they are isotropic over the same field extensions of the base field. For instance, two isotropic motives are equivalent. 

We now consider a subcategory of $\RM{F}{\mz/p\mz}$, which, as recalled below, contains the motives we are interested in, and for which we will be able to prove an isomorphism criterion for objects using the Tate trace (see Theorem~\ref{main.thm}). 
\begin{defi} \label{definit}
\begin{enumerate}
\item A motive $P\in\RM{F}{\mz/p\mz}$ is called {\em initial} if there exists an irreducible geometrically split variety $X$ satisfying the nilpotence principle such that $P$ is isomorphic to $U_X\{i\}$ for some $i\in\mz$, where $U_X$ is the upper motive of $X$. 
\item The full additive subcategory of $\RM{F}{\mz/p\mz}$ generated by initial motives is denoted by $\IM{F}{\mz/p\mz}$. 
\end{enumerate}
\end{defi}

\begin{ex}\label{exproj}
    Let $G$ be a semi-simple affine algebraic group over $F$ and let $E/F$ be a minimal field extension such that $G_E$ is of inner type. Fix an intermediate field extension $E/K/F$ and a projective $G_K$-homogeneous variety $X$. The corestriction of $X$ to $F$ corresponds to the scheme $X$, viewed as an $F$-variety through the morphism $\Spec(K)\longrightarrow \Spec(F)$. 
    In this paper, we refer to such varieties as quasi-$G$-homogeneous varieties\footnote{Quasi-homogeneous varieties were initially considered by Karpenko in~\cite{K-outer}. Note that in Karpenko's paper, a quasi-homogeneous variety is defined as a variety whose connected components are corestrictions of $G_L$-homogeneous varieties for various $G$ and $L$.}.  
   By~\cite[\S 2]{K-outer}, $X$ is geometrically split and satisfies the nilpotence principle.
   Assume now $G$ is a semi-simple affine algebraic group over $F$ of $p$-inner type, i.e. $G_E$ is inner for some field extension $E/F$ of $p$-power degree. By \cite[§1]{K-outer}, the motive of a projective $G$-homogeneous variety decomposes in $\M_F(\mz/p\mz)$ as a direct sum of $U_{X_i}\{k_i\}$ for some quasi-$G$-homogeneous varieties $X_i$ and some $k_i\in \mz$. In particular, if $G$ is $p$-inner, motives of projective $G$-homogeneous varieties with coefficients in $\mz/p\mz$ belong to $\IM{F}{\mz/p\mz}$.

\end{ex}
\begin{rem}
\label{outer.rem}
For arbitrary semi-simple algebraic groups, motives of projective homogeneous varieties do not belong to $\IM{F}{\mz/p\mz}$ (see \cite[Example 3.3]{K-outer}). The techniques of the present article are thus only suited for $p$-inner groups and in a subsequent work (joint with N. Karpenko) we construct counterexamples to Theorem \ref{main.thm} out of this context.
\end{rem} 

\begin{rem}
Let $X$ be a geometrically irreducible, geometrically split variety satisfying the nilpotence principle. After extension to a field $L/F$, the upper motive $U_X$ of $X$ is still a direct summand of $M(X_L)$ defined by a projector of multiplicity $1$, but it may not be indecomposable anymore. Therefore, $U_{(X_L)}$ is a direct summand of ${(U_X)}_L$ but they need not be isomorphic. Given a motive  $M$ in $\IM{F}{\mz/p\mz}$, it is not known in general whether or not $M_L\in \IM{L}{\mz/p\mz}$. 
\end{rem}
A nice property of $\IM{F}{\mz/p\mz}$ is that the hook of an element is easy to compute from a complete decomposition: by example~\ref{upperhook}, if 
$M=\sum_{k=1}^rU_{X_k}\{i_k\}$, then $h(M)={\mathrm {min}}\{i_k,\ k\in\llbracket 1,r\rrbracket\}$. In addition, we have the following isomorphism criterion for indecomposable motives: 
\begin{lem} 
\label{indec.iso}
Let $M=U_X\{i\}$ and $N=U_Y\{j\}$ be two initial motives. Then $M\simeq N$ if and only if $i=j$ and $X\approx_p Y$.  
\end{lem}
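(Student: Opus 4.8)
The plan is to deduce both implications from the two invariants introduced so far, the hook and the upper-motive/domination dictionary. For the ``if'' direction, assume $i=j$ and $X\approx_\Lambda Y$. By Lemma~\ref{upper.lem}, the equivalence $X\approx_\Lambda Y$ is equivalent to an isomorphism $U_X\simeq U_Y$ in $\M_F(\Lambda)$; twisting this isomorphism by $\{i\}=\{j\}$ gives $M=U_X\{i\}\simeq U_Y\{j\}=N$. This direction is immediate and uses nothing beyond Lemma~\ref{upper.lem}.

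For the ``only if'' direction, suppose $M\simeq N$, i.e. $U_X\{i\}\simeq U_Y\{j\}$. First I would extract the shift: by Example~\ref{upperhook} we have $h(U_X\{i\})=i$ and $h(U_Y\{j\})=j$, and the hook is an isomorphism invariant (it is computed from $\Ch_*$ of a splitting field, which is insensitive to isomorphism), so $i=j$. It then follows that $U_X\simeq U_Y$ after untwisting by $\{-i\}$, and Lemma~\ref{upper.lem} again converts this isomorphism of upper motives into the equivalence $X\approx_\Lambda Y$.

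The only point requiring a little care — and the step I expect to be the main (mild) obstacle — is that Lemma~\ref{upper.lem} is stated for the \emph{indecomposable} upper motives $U_X$, $U_Y$ attached to geometrically split varieties satisfying the nilpotence principle, so one must make sure that the $X$, $Y$ furnished by the definition of ``initial'' (Definition~\ref{definit}) are of exactly this kind; but this is built into the definition, so nothing extra is needed. One should also note that the Krull--Schmidt property (Proposition~\ref{KS}) guarantees that each side is indecomposable, so the isomorphism $M\simeq N$ really is an isomorphism of indecomposable objects and the comparison of hooks and of the underlying upper motives is legitimate. Assembling these observations gives the equivalence as stated.
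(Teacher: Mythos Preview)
Your proposal is correct and follows essentially the same route as the paper: use the hook invariant (via Example~\ref{upperhook}) to force $i=j$, then untwist and invoke Lemma~\ref{upper.lem} to pass between $U_X\simeq U_Y$ and $X\approx_\Lambda Y$. The extra remarks about Krull--Schmidt and indecomposability are harmless but unnecessary here, since the indecomposability of $U_X$, $U_Y$ is part of their definition and the argument only requires that the hook be an isomorphism invariant.
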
 
\begin{proof}
If $M$ and $N$ are isomorphic, they have the same hook, so we have $i=j$. Hence, $M$ and $N$ are isomorphic if and only if $i=j$ and $U_X$ and $U_Y$ are isomorphic. Lemma~\ref{upper.lem} finishes the proof. 
\end{proof} 
We may also compute the Tate trace of a tensor product of two motives $M$ and $N$ in $\IM{F}{\mz/p\mz}$: 
\begin{lem}
\label{Tateproduct}
For all $N,M\in\IM{F}{\mz/p\mz}$, we have $\Tr(M\otimes N)=\Tr(M)\otimes \Tr(N)$.  
\end{lem}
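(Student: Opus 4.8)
The plan is to reduce to the case where both $M$ and $N$ are \emph{indecomposable} initial motives, and then play off the rank and the hook against each other using the Krull--Schmidt property. First I would write $\Tr(M)$ and $\Tr(N)$ as pure Tate summands of maximal rank; the inclusion $\Tr(M)\otimes\Tr(N)\hookrightarrow M\otimes N$ as a direct summand is then immediate, since tensoring a split injection with a motive keeps it split, and a tensor product of pure Tate motives is pure Tate. Hence $\Tr(M)\otimes\Tr(N)$ is a pure Tate summand of $M\otimes N$, which gives $\rk\Tr(M\otimes N)\ge\rk\Tr(M)\cdot\rk\Tr(N)$. The whole content is the reverse inequality, and by Example~\ref{upperhook} the rank is additive over direct sums, so it suffices to treat $M=U_X\{i\}$ and $N=U_Y\{j\}$ indecomposable; by twisting we may further assume $i=j=0$, i.e. $M=U_X$, $N=U_Y$ with $h(M)=h(N)=0$.

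The key point in the indecomposable case is: if $U_X\otimes U_Y$ has a nontrivial Tate trace, then both $X$ and $Y$ are isotropic with coefficients in $\Lambda$, so $\Tr(U_X)\simeq\Lambda\{0\}$ and $\Tr(U_Y)\simeq\Lambda\{0\}$ (using that $h(U_X)=0$ and, since $U_X$ is the upper motive of an isotropic variety, the unique upper Tate summand $\Lambda\{0\}$ already realizes the maximal-rank Tate summand of $U_X$ — indeed an indecomposable motive has Tate trace $0$ or $\Lambda\{h\}$). Then $\Tr(U_X)\otimes\Tr(U_Y)\simeq\Lambda\{0\}$, and I must show $\Tr(U_X\otimes U_Y)$ has rank at most $1$. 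To get the claim ``$U_X\otimes U_Y$ isotropic $\Rightarrow$ $X$ and $Y$ isotropic,'' I would argue contrapositively: if, say, $X$ is anisotropic with coefficients in $\Lambda$, then over any field extension $E/F$ the motive $U_{X_E}$ — being the upper summand of an anisotropic variety — still has $\Ch_{h}$ part vanishing at the relevant spot, or more directly: a Tate summand $\Lambda\{k\}$ of $U_X\otimes U_Y$ would, after base change to a splitting field of $U_Y$, force $U_X$ to acquire a Tate summand, contradicting anisotropy of $X$ via Lemma~\ref{isotropy} and the fact that the decomposition of a pure Tate motive is unambiguous. This uses the nilpotence principle (to lift projectors along the field extension) exactly as in Proposition~\ref{KS} and~\cite{K-inner}.

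Concretely: suppose $\Lambda\{k\}$ is a summand of $U_X\otimes U_Y$. Let $E$ be a splitting field of $U_Y$, so $(U_Y)_E$ is pure Tate, say $\bigoplus_s\Lambda\{k_s\}$. Then $(U_X\otimes U_Y)_E\simeq\bigoplus_s (U_X)_E\{k_s\}$, and $\Lambda\{k\}$ being a summand of the left side means, by Krull--Schmidt over $E$, that $\Lambda\{k-k_s\}$ is a summand of $(U_X)_E$ for some $s$; but $(U_X)_E$ is a summand of $M(X_E)$, so $X_E$ is isotropic with coefficients in $\Lambda$ by Lemma~\ref{isotropy}. Applying this with $E=F(Y)$ (a splitting field when we first replace $Y$ by a variety splitting $U_Y$ generically — or simply invoking that $U_Y$ splits over a field $E$ for which $Y_E$, hence $X_E$, is isotropic) shows that whenever $U_X\otimes U_Y$ is isotropic over a field, $X$ is isotropic over that same field; symmetrically for $Y$. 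Since $U_X\otimes U_Y$ itself is isotropic over $F$ precisely when $\Tr(U_X\otimes U_Y)\ne 0$, we conclude that a nontrivial Tate trace of $U_X\otimes U_Y$ forces both $X$ and $Y$ isotropic over $F$, hence $\Tr(U_X)\otimes\Tr(U_Y)\simeq\Lambda\{0\}\ne 0$; and conversely if either is anisotropic both sides vanish. It remains only to see that when both are isotropic the trace has rank exactly $1$: here $\Tr(U_X)=\Lambda\{0\}$ and $\Tr(U_Y)=\Lambda\{0\}$ since $U_X,U_Y$ are indecomposable of hook $0$, and the maximal Tate summand of the indecomposable-tensor could a priori be larger, but applying Lemma~\ref{isotropy} shows any Tate summand $\Lambda\{k\}$ of $U_X\otimes U_Y$ must sit in degree $k=0$ (because $h(U_X\otimes U_Y)=0$ by Example~\ref{upperhook} and by symmetry the ``co-hook'' argument pins $k$), and $\Ch_0$ of a splitting is $\Lambda$, so the rank is $1$.

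\textbf{Main obstacle.} The delicate step is the last one: ruling out a Tate trace of rank $>1$ in the indecomposable case. The inequality $\rk\Tr(M\otimes N)\le\rk\Tr(M)\cdot\rk\Tr(N)$ in general does \emph{not} follow formally — one genuinely needs that an indecomposable initial motive has Tate trace $0$ or a single Tate motive, together with the ``isotropy propagates'' argument above run over the generic points of $X$ and of $Y$ simultaneously. I expect the cleanest route is to show directly that if $\Lambda\{k\}$ is a summand of $U_X\otimes U_Y$ then, localizing at a splitting field of one factor and using Krull--Schmidt plus Lemma~\ref{isotropy}, both $X$ and $Y$ become isotropic already over $F$ (not just over the big field), which collapses $U_X$ and $U_Y$ to contain $\Lambda\{0\}$, and then a hook/co-hook pinch forces $k=0$ and rank $1$.
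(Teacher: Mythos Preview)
Your reduction to the indecomposable case $M=U_X$, $N=U_Y$ is correct and is what the paper does. But your argument for the key implication has a genuine gap: after base-changing to a splitting field $E$ of $U_Y$, you only conclude that $X_E$ is isotropic, not that $X$ is isotropic over $F$. Your attempted fix $E=F(Y)$ does not work, since $F(Y)$ need not split $U_Y$ (that would require $Y$ to be generically split, which is not assumed), and your parenthetical ``replace $Y$ by a variety splitting $U_Y$ generically'' is not justified in this generality. So the step ``$U_X\otimes U_Y$ isotropic over $F$ $\Rightarrow$ $X$ isotropic over $F$'' is not established by your argument.

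The paper bypasses this entirely with a one-line observation you missed: $U_X\otimes U_Y$ is a direct summand of $M(X)\otimes M(Y)\simeq M(X\times Y)$. Hence if $U_X\otimes U_Y$ has any Tate summand, Lemma~\ref{isotropy} yields a $0$-cycle of degree $1$ in $\Lambda$ on $X\times Y$ \emph{over $F$}; pushing it forward along the two projections gives such cycles on $X$ and on $Y$, so both are isotropic over $F$. Now comes the point that dissolves your ``main obstacle'': since $U_X$ is the \emph{indecomposable} upper summand of $M(X)$ and $\Lambda\{0\}$ is an (indecomposable) upper summand of $M(X)$ by Lemma~\ref{isotropy}, Krull--Schmidt gives $U_X\simeq\Lambda\{0\}$, not merely $\Lambda\{0\}\subset U_X$. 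Likewise $U_Y\simeq\Lambda\{0\}$, whence $U_X\otimes U_Y\simeq\Lambda\{0\}$ on the nose. There is no rank-$>1$ possibility to rule out, no hook/co-hook pinch, and no lifting of projectors along field extensions.
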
 
\begin{proof}
Starting from decompositions $M=\sum_{k=1}^rU_{X_k}\{i_k\}$ and $N=\sum_{\ell=1}^sU_{Y_\ell}\{j_\ell\}$ as sums of initial motives, we get 
\[M\otimes N\simeq\sum_{1\leq k\leq r,\ 1\leq \ell\leq s} \bigl(U_{X_k}\otimes U_{Y_\ell}\bigr)\{i_k+j_\ell\}.\]

Given two irreducible geometrically split varieties satisfying the nilpotence principle $X$ and $Y$, we claim that the Tate trace of $U_{X}\otimes U_{Y}$ is non zero if and only if both factors are Tate motives. Indeed, $U_X\otimes U_Y$ is a direct summand of $M(X\times Y)$. Therefore, if $\Tr(U_X\otimes U_Y)\not \simeq0$, then $X\times Y$ is isotropic mod $p$. Hence both $X$ and $Y$ are isotropic mod $p$, and by Lemma~\ref{isotropy}, we get that $U_X\simeq \mz/p\mz\{0\}\simeq U_Y$. 
As a consequence, the Tate factors in a complete decomposition of $M\otimes N$ all occur as a tensor product of a Tate factor in a complete decomposition of $M$ and a Tate factor in a complete decomposition of $N$. 
 \end{proof} 

\section{Motivic isomorphisms and partial splitting fields}

In this section, we state and prove the main result of this article, which shows how Tate traces over field extensions determine isomorphism classes in $\IM{F}{\mz/p\mz}$. Here is the supporting terminology.

\begin{defi}Let $M$ be a motive in $\IM{F}{\mz/p\mz}$ and write $M\simeq \sum_{k=1}^rU_{X_k}\{i_k\}$ as a direct sum of initial motives. The function fields $F(X_1),...,F(X_r)$ are called partial splitting fields of the motive $M$, and we say that $\mathcal{C}=\{F(X_1),...,F(X_r)\}$ is a partial splitting family of $M$.
\end{defi}

By the Krull-Schmidt property, two partial splitting families of a motive $M$ have the same cardinality. Moreover, by Lemma \ref{indec.iso}, given two such families $\mathcal{C}=\{F(X_1),...,F(X_r)\}$ and $\mathcal{C}'=\{F(Y_1),...,F(Y_r)\}$, and renumbering one of them if necessary, we may assume that for all $1\leq i\leq r$, $X_i$ and $Y_i$ are equivalent mod $p$.

\begin{ex}
\label{phv.ex}
Let $G$ be a $p$-inner semi-simple algebraic group. Fix a minimal field extension $E/F$ such that $G_E$ is of inner type. By Example \ref{exproj}, $M(X)$ has a splitting family given by several function fields of quasi-$G$-homogeneous varieties.
\end{ex}

We now prove that Tate traces over partial splitting fields detect isomorphisms in $\IM{F}{\mz/p\mz}$. 

\begin{thm}
\label{main.thm}
Let $M$ and $N$ be two motives in $\IM{F}{\mz/p\mz}$. The following assertions are equivalent:
\begin{enumerate} 
\item[(i)] $M$ and $N$ are isomorphic; 
\item[(ii)] $M_L$ and $N_L$ have isomorphic Tate traces for all field extensions $L/F$;
\item[(iii)] There is a splitting family $\mathcal{C}_M$ of $M$ and a splitting family $\mathcal{C}_N$ of $N$ such that $M$ and $N$ have isomorphic Tate traces over all fields in $\mathcal{C}_M\cup\mathcal{C}_N$. 
\end{enumerate} 
\end{thm} 

\begin{rem} 
When condition $(ii)$ holds, we say that $M$ and $N$ have isomorphic higher Tate trace. 
\end{rem} 
\begin{proof} 
Since $\IM{F}{\mz/p\mz}$ is a full subcategory of $\RM{F}{\mz/p\mz}$, where the Tate trace is defined, we only have to show $(iii)\Rightarrow (i)$.

Assume that $M$ and $N$ are objects of $\IM{F}{\mz/p\mz}$, and fix $\mathcal{C}_M$, $\mathcal{C}_N$ two respective splitting families given by motivic decompositions 
\begin{equation}
  M\simeq\sum_{k=1}^nU_{X_k}\{i_k\}\mbox{ and }N\simeq\sum_{\ell=1}^mU_{Y_\ell}\{j_\ell\}.  
\end{equation}

If $M$ has trivial rank, by the nilpotence principle, it is trivial. 
We claim that $N$ also is trivial in this case. Indeed, if $N$ is non-trivial, the splitting family ${\mathcal C}_N$ is non empty, and $N$, hence also $M$, have non-zero Tate trace over all fields of ${\mathcal C}_N$. 

So we proceed by induction on the minimum $r$ of the ranks of $M$ and $N$ and we may assume $r\geq 1$. 
Permuting the summands in $(1)$ if necessary, we may order them by increasing hooks, so that $h(M)=i_1$ and $h(N)=j_1$. Since $M$ contains a summand isomorphic to $\mz/p\mz\{i_1\}$ over the field $F(X_1)$ which belongs to ${\mathcal C}_M$, condition $(iii)$ guarantees that $h(N)=j_1\leq i_1$. Using a similar process with $F(Y_1)$, we get that $M$ and $N$ have the same hook $h$ and tensoring $M$ and $N$ by $\mz/p\mz\{-h\}$, we may assume they both have hook $0$.

Let $M_0$ (respectively $N_0$) be the sum of all indecomposable summands in the complete decomposition of $M$ (respectively $N$) in $(1)$ having hook $0$. The motives $M_0$ and $N_0$ inherit from $(1)$ decompositions $M_0=\sum_{k=1}^s U_{X_k}$ and  $N_0=\sum_{\ell=1}^tU_{Y_\ell}$ for some non-zero integers $s$ and $t$.

Consider the function field $F_1=F(X_1)$. The variety $X_1$ becomes isotropic mod $p$ over $F_1$, therefore by Lemma~\ref{isotropy}, $(M_0)_{F_1}$ contains a summand isomorphic to $\mz/p\mz\{0\}$. Since $F_1$ belongs to $\mathcal{C}_M$, condition $(iii)$ implies that $\mz/p\mz\{0\}$ also is a summand of $N_{F_1}$. By definition of $N_0$, we have $N=N_0+N_1$ with $h(N_1)\geq 1$, so that $\mz/p\mz\{0\}$ actually is a summand of $(N_0)_{F_1}$. It follows that $\mz/p\mz\{0\}$ is a summand of $(U_{Y_{\ell_1}})_{F_1}$ for some $\ell_1\in\llbracket 1,t\rrbracket$, hence of $M(Y_{\ell_1})_{F_1}$. Applying again Lemma~\ref{isotropy}, we get that $(Y_{\ell_1})_{F_1}$ is isotropic mod $p$, that is $X_1$ dominates $Y_{\ell_1}$ mod $p$. The same reasoning with the function field $F(Y_{\ell_1})$ shows that the variety $Y_{\ell_1}$ dominates $X_{k_1}$, for some $k_1\in\llbracket 1,s\rrbracket$.
The integer $k_1$ need not be equal to $1$; but iterating the process, we get a decreasing chain of varieties 
\[X_1\succeq_{p} Y_{\ell_1}\succeq_p X_{k_1}\succeq_p Y_{\ell_2}\succeq_p\dots\succeq_p X_{k_n},\] for some integers $k_i\in\llbracket 1,s\rrbracket$ and $\ell_i\in\llbracket 1,t \rrbracket $. 
Since the set of varieties $X_{k}$ and $Y_{\ell}$ are finite, there exists $k\in\llbracket 1,s\rrbracket$ and $\ell\in\llbracket 1,t\rrbracket$ such that $X_k\succeq_p Y_\ell \succeq_p X_k$, hence, $X_k\approx_p Y_\ell$. By Lemma~\ref{indec.iso}, it follows that 
$U_{X_k}$ and $U_{Y_\ell}$ are isomorphic, so we may write $M\simeq U_{X_k}+M'$ and $N\simeq U_{X_k}+N'$ for some motives $M'$ and $N'$ in  $\IM{F}{\mz/p\mz}$. As the motives $M'$ and $N'$ have partial splitting families contained in ${\mathcal C}_M $ and ${\mathcal C}_N$ and both have rank strictly lesser than $r$, we conclude by induction.
\end{proof} 

\begin{rem}
As pointed out by Stefan Gille, the proof of Theorem \ref{main.thm} carries on replacing the Tate traces of $N_L$ and $M_L$ by their largest $0$-dimensional direct summands. We would call these coarser invariants their Artin-Tate Trace.
\end{rem}

As explained above, our theorem applies with $\mz/p\mz$ coefficients to all direct summands of motives of projective homogeneous varieties under the action of $p$-inner semi-simple algebraic groups. In this setting, Theorem~\ref{main.thm} recovers numerous known results, see the following and \S\,\ref{application}. 
For quadrics, it provides a new proof of Vishik's criterion for motivic equivalence: 
\begin{cor}[Vishik~\cite{vish-lens}]
\label{Vishik}
Let $Q$ and $Q'$ be smooth projective quadrics corresponding to quadratic forms $q$ and $q'$ of the same dimension. 
The motives of $Q$ and $Q'$ are isomorphic in $\M_F(\mz/2\mz)$  if and only if the quadratic forms $q_L$ and $q'_L$ have the same Witt index for all field extension $L/F$.  
\end{cor}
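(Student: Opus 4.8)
The plan is to read this off from Theorem~\ref{main.thm}. A smooth projective quadric $Q$ attached to a non-degenerate quadratic form $q$ with $\dim q\geq 3$ is a projective homogeneous variety under $\mathrm{O}^+(q)$, a semisimple group of type $B_m$ if $\dim q=2m+1$ and of type $D_m$ if $\dim q=2m$. In the first case this group is of inner type; in the second it is of $2$-inner type, becoming inner over the quadratic extension $F(\sqrt{\mathrm{disc}\,q})$ --- the trialitarian forms $^3\!D_4$ and $^6\!D_4$ do not occur here, since a group arising from a quadratic form has $\ast$-action factoring through $\mz/2\mz$. Hence, by~\cite[Thm.~1.1]{K-outer}, both $M(Q)$ and $M(Q')$ belong to $\IM{F}{\mz/2\mz}$, and Theorem~\ref{main.thm} tells us that $M(Q)\simeq M(Q')$ in $\M_F(\mz/2\mz)$ if and only if $\Tr\bigl(M(Q_L)\bigr)\simeq\Tr\bigl(M(Q'_L)\bigr)$ for every field extension $L/F$.

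It then remains to identify this condition with equality of Witt indices over every $L/F$. Fixing $L/F$ and applying the Rost decomposition recalled in Example~\ref{qf.ex} to the form $q_L$ (together with its well-known odd-dimensional counterpart, see~\cite[\S 94]{EKM}), and using Springer's theorem to ensure that the anisotropic part of $q_L$ contributes trivially to the Tate trace, we obtain
\[
\Tr\bigl(M(Q_L)\bigr)\;\simeq\;\sum_{k=0}^{i_0(q_L)-1}\Bigl(\mz/2\mz\{k\}+\mz/2\mz\{\dim q-2-k\}\Bigr),
\]
a pure Tate motive of rank $2\,i_0(q_L)$, where $i_0(q_L)$ denotes the Witt index of $q_L$; the same formula holds for $Q'$, with $\dim q'=\dim q$. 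Since isomorphic pure Tate motives have the same rank (Example~\ref{Taterankhook}), $\Tr(M(Q_L))\simeq\Tr(M(Q'_L))$ forces $i_0(q_L)=i_0(q'_L)$; conversely, when $i_0(q_L)=i_0(q'_L)$ and $\dim q=\dim q'$ the displayed formula exhibits the two Tate traces as the same pure Tate motive. Combining this with the previous paragraph gives the corollary.

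There is no real obstacle: the statement is a specialisation of Theorem~\ref{main.thm}. The only points deserving a word of justification are that quadrics genuinely fall within the scope of the theorem --- i.e.\ that the relevant orthogonal groups are $2$-inner, so that in particular $^6\!D_4$ is irrelevant --- and the (routine) bookkeeping identifying the rank of $\Tr\bigl(M(Q_L)\bigr)$ with twice $i_0(q_L)$, which also handles uniformly the cases where $q_L$ is anisotropic, isotropic or split.
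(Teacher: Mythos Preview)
Your proof is correct and follows the same route as the paper: apply Theorem~\ref{main.thm} and translate the higher Tate trace condition into equality of Witt indices via the Rost decomposition of Example~\ref{qf.ex}. You simply spell out in more detail why $M(Q)\in\IM{F}{\mz/2\mz}$ (the paper takes this for granted from the discussion after Definition~\ref{definit}) and make the bijection $\Tr\bigl(M(Q_L)\bigr)\leftrightarrow i_0(q_L)$ explicit; the only residual loose end is the trivial case $\dim q\leq 2$, where the orthogonal group is not semisimple, but this is easily handled directly.
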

\begin{proof}
Rost's decomposition~\cite[Prop. 2]{Rost} states that for any field extension $L/F$, the Tate trace of the motive $M(Q_L)$ (resp. $M(Q'_L)$) determines and is determined by the Witt index of $q_L$ (resp. of $q'_L$) and the dimension of $Q$ and $Q'$. The corollary then follows immediately from Theorem~\ref{main.thm}. 
\end{proof} 
The same argument applied to involution varieties, whose Tate traces are described in Example~\ref{oi.ex}, leads to the following generalization: 
\begin{cor}
\label{IV.cor}
Assume $F$ has characteristic different from $2$. 
Let $A$ be a central simple $F$-algebra and $\sigma$ and $\tau$ two orthogonal involutions on $A$. 
The motives of the corresponding involution varieties $X_\sigma$ and $X_\tau$ are isomorphic in $\M_F(\mz/2\mz)$ if and only if for all field extensions $L/F$ such that $A_L$ is a split algebra, we have $i_w(\sigma_L)=i_w(\tau_L)$. 
\end{cor}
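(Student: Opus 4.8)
The plan is to deduce this statement from Theorem~\ref{main.thm} in the same way as Corollary~\ref{Vishik}, now feeding in the computation of the Tate trace of an involution variety recorded in Example~\ref{oi.ex}.

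First I would check that Theorem~\ref{main.thm} applies, i.e.\ that $M(X_\sigma)$ and $M(X_\tau)$ lie in $\IM{F}{\mz/2\mz}$. This holds because $X_\sigma$ and $X_\tau$ are projective homogeneous varieties under the semisimple groups $\PGO^+(A,\sigma)$ and $\PGO^+(A,\tau)$, which are of $2$-inner type: they become of inner type over the (at most quadratic) extension of $F$ cut out by the discriminant of the involution. Hence by~\cite[Thm. 1.1]{K-outer} their motives with coefficients in $\mz/2\mz$ decompose as sums of twists of upper motives, so belong to $\IM{F}{\mz/2\mz}$. Consequently, Theorem~\ref{main.thm} gives that $M(X_\sigma)$ and $M(X_\tau)$ are isomorphic in $\M_F(\mz/2\mz)$ if and only if $\Tr\bigl(M(X_{\sigma,L})\bigr)\simeq\Tr\bigl(M(X_{\tau,L})\bigr)$ for every field extension $L/F$.

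It then only remains to match this condition on higher Tate traces with the stated condition on Witt indexes. Write $2m=\deg A$ and fix $L/F$, and note that whether $A_L$ is split does not depend on the chosen involution. If $A_L$ is non-split, then by Example~\ref{oi.ex} both $X_{\sigma,L}$ and $X_{\tau,L}$ are anisotropic with coefficients in $\mz/2\mz$, so $\Tr\bigl(M(X_{\sigma,L})\bigr)\simeq 0\simeq\Tr\bigl(M(X_{\tau,L})\bigr)$ and the equality of Tate traces over $L$ holds automatically. If $A_L$ is split, then again by Example~\ref{oi.ex} we have $\Tr\bigl(M(X_{\sigma,L})\bigr)\simeq\sum_{k=0}^{i_w(\sigma_L)-1}\bigl(\mz/2\mz\{k\}+\mz/2\mz\{2m-2-k\}\bigr)$, and similarly for $\tau$; in particular this Tate trace determines, and is determined by, the integer $i_w(\sigma_L)$. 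Therefore $M(X_\sigma)$ and $M(X_\tau)$ have isomorphic higher Tate trace if and only if $i_w(\sigma_L)=i_w(\tau_L)$ for all $L/F$ such that $A_L$ is split, and combining this with the previous observation finishes the proof.

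I do not anticipate a real obstacle: once Theorem~\ref{main.thm} and the Tate trace computation of Example~\ref{oi.ex} are in hand, the argument is purely formal. The closest thing to a subtlety is the dichotomy above, which is precisely what explains why the hypothesis of the corollary is restricted to splitting fields of $A$: over extensions where $A$ remains non-split the Witt indexes carry no information, but the corresponding Tate traces both vanish, so such extensions impose no condition on either side.
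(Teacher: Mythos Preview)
Your proposal is correct and follows exactly the approach indicated in the paper: the corollary is deduced from Theorem~\ref{main.thm} by feeding in the Tate trace computation of Example~\ref{oi.ex}, just as Corollary~\ref{Vishik} was deduced using Example~\ref{qf.ex}. Your explicit verification that $\PGO^+(A,\sigma)$ is $2$-inner and your handling of the split/non-split dichotomy for $A_L$ spell out precisely the details the paper leaves implicit.
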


\begin{rem}
 Corollary~\ref{IV.cor}, combined with Karpenko's theorem on anisotropy of orthogonal involutions after generic splitting \cite{K-anisotropy} and \cite{dC}, provides a new proof that involution varieties are critical, see \cite[Thm. 3.6]{dCQZ}.
 \end{rem}

\begin{ex}\label{SBex} Consider two central simple $F$-algebras $A$ and $B$ of the same degree $n$. The corresponding Severi-Brauer varieties $\SB(A)$ and $\SB(B)$ have isomorphic motives with coefficients in $\mz/p\mz$ if and only if their $p$-primary components $A_p$ and $B_p$ generate the same subgroup in the Brauer group of $F$. This can be deduced from the results of \cite{K-inner} or shown as follows. By \cite{K-motif}, the motives of $\SB(A)$ and $\SB(B)$ decompose, respectively, as direct sums of shifts of the motives of the Severi-Brauer varieties of $A_p$ and $B_p$. It follows that given a field extension $E/F$, the motive $M(\SB(A_E))$ (resp. $M(\SB(B_E))$) is either a pure Tate motive isomorphic to $M({\mathbb P}^{n-1}_E)$ or anisotropic, depending on whether the $p$-primary component $(A_p)_E$ (resp. $(B_p)_E$) is split or not. The conclusion is implied by Theorem~\ref{main.thm}, since two central simple algebras are split by the same field extensions of $F$ if and only if they generate the same subgroup of the Brauer group of $F$, by \cite[Theorem 9.3]{amitsur}.
\end{ex}
\begin{rem}
\label{Z.rem}
 As opposed to Vishik's criterion (Cor.~\ref{Vishik}), Theorem~\ref{main.thm} does not hold for motives with $\mz$ coefficients. Indeed, consider two central division $F$-algebras $D$ and $D'$ of degree $n$ and let $\SB(D)$ and $\SB(D')$ be their respective Severi-Brauer varieties. Assume $D$ and $D'$ generate the same subgroup of the Brauer-group, and yet $D$ is isomorphic neither to $D'$ nor to its opposite algebra. By~\cite[Criterion 7.1]{K:motivic}, the motives of $\SB(D))$ and $\SB(D')$ are not isomorphic in $\M_F(\mz)$; nevertheless, by theorems of Ch\^atelet and Amitsur (see~\cite[5.1.3, 5.4.1]{GS}), for any field extension $E/F$, either $M(\SB(D))$ and $M(\SB(D'))$ are anisotropic over $E$, or both are pure Tate motives isomorphic to $M({\mathbb{P}}_E^{n-1})$.
\end{rem}

We conclude this section with a cancellation property in $\IM{F}{\mz/p\mz}$: 
\begin{cor} \label{produits}
Let $M, N, N'\in\IM{F}{\mz/p\mz}$ be three non-trivial motives. 

If $N\succeq M$, $N'\succeq M$, and $M\otimes N\simeq M\otimes N'$, then $N\simeq N'$. 
\end{cor}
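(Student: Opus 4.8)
The plan is to reduce to the anisotropic case using the Tate trace and then exploit the multiplicativity of rank under tensor product, together with Theorem~\ref{main.thm}. First I would observe that by Lemma~\ref{Tateproduct} we have $\Tr(M\otimes N)=\Tr(M)\otimes\Tr(N)$ and $\Tr(M\otimes N')=\Tr(M)\otimes\Tr(N')$; since $M\otimes N\simeq M\otimes N'$, these Tate traces are isomorphic. More generally, applying the same reasoning after scalar extension to an arbitrary $L/F$ (using that $M_L,N_L,N'_L\in\RM{L}{\Lambda}$ and the compatibility of $\otimes$ with scalar extension), we get $\Tr(M_L)\otimes\Tr(N_L)\simeq\Tr(M_L)\otimes\Tr(N'_L)$ for all $L$. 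The key point is that since $N\succeq M$ and $N'\succeq M$, whenever $\Tr(M_L)\not\simeq 0$ we also have $\Tr(N_L)\not\simeq 0$ and $\Tr(N'_L)\not\simeq 0$; so over such $L$, comparing ranks via $\rk(A\otimes B)=\rk(A)\rk(B)$ and the fact that $\rk(\Tr(M_L))\geq 1$ forces $\rk(\Tr(N_L))=\rk(\Tr(N'_L))$, and then the hook computation $h(A\otimes B)=h(A)+h(B)$ from Example~\ref{upperhook} pins down the twists. The hard part will be upgrading this numerical equality of Tate traces to an actual isomorphism $\Tr(N_L)\simeq\Tr(N'_L)$ over every $L$, including those $L$ with $\Tr(M_L)\simeq 0$, where the tensor identity gives no information directly.

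To handle this, I would argue as follows. Fix $L/F$. If $\Tr(M_L)\not\simeq 0$, write $\Tr(M_L)=\sum_{a}\Lambda\{m_a\}$ (finitely many Tate summands, $a$ ranging over a finite index set), and similarly $\Tr(N_L)=\sum_b\Lambda\{n_b\}$, $\Tr(N'_L)=\sum_c\Lambda\{n'_c\}$. Then $\Tr(M_L)\otimes\Tr(N_L)=\sum_{a,b}\Lambda\{m_a+n_b\}$ and likewise for $N'$, and these are isomorphic pure Tate motives, hence have the same multiset of twists. Since this multiset is the sumset (with multiplicity) of $\{m_a\}$ with $\{n_b\}$, and the analogous statement holds with $\{n'_c\}$, a standard argument on multisets of integers — e.g. comparing generating functions $(\sum_a t^{m_a})(\sum_b t^{n_b})=(\sum_a t^{m_a})(\sum_c t^{n'_c})$ in $\mz[t,t^{-1}]$ and cancelling the nonzero factor $\sum_a t^{m_a}$, which is legitimate since $\mz[t,t^{-1}]$ is a domain — shows $\{n_b\}=\{n'_c\}$ as multisets, i.e. $\Tr(N_L)\simeq\Tr(N'_L)$. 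The domination hypotheses $N\succeq M$ and $N'\succeq M$ are exactly what is needed here: they guarantee $\Tr(N_L)$ and $\Tr(N'_L)$ are \emph{nonzero} whenever $\Tr(M_L)$ is, so neither multiset is empty and the cancellation argument applies without degenerate cases on the $N$-side.

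It remains to treat $L$ with $\Tr(M_L)\simeq 0$, i.e. $M_L$ anisotropic. Here I would bootstrap from the nonzero case via a generic splitting argument: by Example~\ref{upperhook} and the structure of $\IM{F}{\Lambda}$, $M$ is a nontrivial sum of initial motives, so some $U_{X}\{i\}$ occurs in $M$ with $X$ irreducible geometrically split satisfying the nilpotence principle; over $E=F(X)$ the variety $X$ becomes isotropic, so $\Tr(M_E)\not\simeq 0$, and by the above $\Tr(N_E)\simeq\Tr(N'_E)$. One then needs that $\Tr(N_L)\simeq\Tr(N'_L)$ for \emph{all} $L$, not just those splitting $M$; but for $L$ with $M_L$ anisotropic we cannot use the tensor identity at $L$ itself. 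Instead, I would note that it suffices, by Theorem~\ref{main.thm} applied to $N$ and $N'$, to prove $\Tr(N_L)\simeq\Tr(N'_L)$ for all $L$, and I would obtain this by a separate induction: replace $M$ by an initial summand $P=U_X\{i\}$ of minimal hook, observe $M\otimes N\simeq M\otimes N'$ together with the Krull--Schmidt property (Proposition~\ref{KS}) lets one isolate the $P\otimes(-)$ part, and use Lemma~\ref{indec.iso} to cancel $P$ up to the equivalence $\approx_\Lambda$; the domination hypotheses ensure the relevant varieties stay in the same $\approx_\Lambda$-class. I expect the main obstacle to be precisely this last step — passing from ``equal higher Tate traces wherever $M$ is isotropic'' to ``equal higher Tate traces everywhere'' — and the cleanest route is likely to run the cancellation directly on the level of the initial-motive decompositions, mirroring the inductive argument in the proof of Theorem~\ref{main.thm}, rather than purely through numerics.
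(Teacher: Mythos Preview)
You have the domination hypothesis backwards, and this is the source of all the difficulty in your last paragraph. By the paper's definition, $N\succeq M$ means that $\Tr(M_E)\not\simeq 0$ for every $E$ with $\Tr(N_E)\not\simeq 0$; equivalently (contrapositive), $\Tr(M_E)\simeq 0$ forces $\Tr(N_E)\simeq 0$. So the assumptions $N\succeq M$ and $N'\succeq M$ do \emph{not} tell you that $\Tr(N_L)$ and $\Tr(N'_L)$ are nonzero when $\Tr(M_L)$ is; they tell you precisely the opposite, namely that when $\Tr(M_L)\simeq 0$ both $\Tr(N_L)$ and $\Tr(N'_L)$ vanish. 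Hence the case you call ``hard'' is in fact immediate: if $\Tr(M_L)\simeq 0$ then $\Tr(N_L)\simeq 0\simeq\Tr(N'_L)$ directly from the domination hypotheses, and the entire generic-splitting/induction scheme you sketch is unnecessary (and, as written, too vague to constitute a proof).

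Once the direction is corrected, your argument actually becomes complete and is in one respect cleaner than the paper's. For $L$ with $\Tr(M_L)\not\simeq 0$, your Laurent-polynomial cancellation
\[
\Bigl(\sum_a t^{m_a}\Bigr)\Bigl(\sum_b t^{n_b}\Bigr)=\Bigl(\sum_a t^{m_a}\Bigr)\Bigl(\sum_c t^{n'_c}\Bigr)\quad\text{in }\mz[t,t^{-1}]
\]
is valid and immediately yields $\Tr(N_L)\simeq\Tr(N'_L)$; note that it does not even require $\Tr(N_L)$ or $\Tr(N'_L)$ to be nonzero, so your appeal to domination at that step is superfluous. The paper instead peels off a maximal common pure-Tate summand $T$ from $\Tr(N_E)$ and $\Tr(N'_E)$ and uses a hook argument to reach a contradiction if $T$ is not all of $\Tr(N_E)$; your generating-function cancellation accomplishes the same thing in one line. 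Combining the two cases and invoking Theorem~\ref{main.thm} finishes the proof.
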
 
\begin{proof} 
Consider $M$, $N$ and $N'$ as above and let $L/F$ be a field extension. Since $M\otimes N\simeq M\otimes N'$, they have the same higher Tate trace. By Lemma~\ref{Tateproduct}, we get that 
$\Tr(M_L)\otimes\Tr(N_L)\simeq \Tr(M_L)\otimes \Tr(N'_L)$. 

Assume first that $\Tr(N_L)=0$. Then $\Tr(M_L)\otimes \Tr(N'_L)=0$. Hence, either $\Tr(M_L)=0$ or $\Tr(N'_L)=0$. Since $N'\succeq M$, we actually get $\Tr(N'_L)=0$ in both cases. 

Assume now that $\Tr(N_L)\not=0$. Since $N\succeq M$, we also have $\Tr(M_L)\not =0$. In particular, $\Tr(M_L)$ has non-zero rank, and it follows that $\Tr(N_L)$ and $\Tr(N'_L)$ have the same rank $r$, see Example~\ref{upperhook}. Let $T$ be a pure Tate motive of maximal rank which is a direct summand of both $\Tr(N_L)$ and $\Tr(N'_L)$. We may decompose $\Tr(N_L)=T+S$ and $\Tr(N'_L)=T+S'$ for some pure Tate motives $S$ and $S'$ that contain no isomorphic Tate summand. By the Krull-Schmidt property, we have 
$\Tr(M_L)\otimes S\simeq \Tr(M_L)\otimes S'$. Assume for the sake of contradiction that $rk(T)<r$, so that $S$ and $S'$ are non-zero. Then the hook of $\Tr(M_L)\otimes S\simeq \Tr(M_L)\otimes S'$ is $h(\Tr(M_L))+h(S)=h(\Tr(M_L))+h(S')$. It follows that $S$ and $S'$ have the same hook $h$. Since they are pure Tate motives, they both contain a summand isomorphic to $\mz/p\mz\{h\}$ and this contradicts the maximality of $T$. 

So, we have proved that $N$ and $N'$ have the same higher Tate trace, and by Theorem~\ref{main.thm}, we get that $N\simeq N'$. 
\end{proof} 
\begin{rem}
Let $D$ and $D'$ be two division algebras of degree $n$. Assume the Brauer class of $D'$ belongs to the subgroup of the Brauer group generated by the class of $D$. Then the projection $\SB(D)\times \SB(D')\rightarrow \SB(D)$ is a projective bundle, and  for an arbitrary ring $\Lambda$, we have 
\[M(\SB(D)\times \SB(D'))\simeq \sum_{i=0}^{n-1} M(\SB(D))\{i\}\mbox{ in }\M_F(\Lambda).\]
It follows that $M(\SB(D)\times \SB(D'))\simeq M(\SB(D)\times \SB(D))$ in $\M_F(\mz)$. Applied to $D$ and $D'$ as in Remark~\ref{Z.rem}, this shows that the Corollary~\ref{produits} does not hold with coefficients in $\mz$. 
In addition,  in $\M_F(\mz/p\mz)$, we also have $M(\SB(D)\times \SB(D))\simeq M(\SB(D)\times M({\mathbb P}_F^{n-1})$. This shows the domination conditions are necessary in Corollary \ref{produits}. 
\end{rem} 

\section{Application to projective homogeneous varieties} 
\label{application}

\subsection{Motives of projective homogeneous varieties}
We now explore consequences of our main theorem in the setting of projective homogeneous varieties. 
Let $G$ be a semi-simple algebraic group over $F$, $T\subset G$ a maximal torus, and $\Delta(G)$ the Dynkin diagram of $G$. Its set of vertices, also denoted by $\Delta(G)$, is given by 
a set of simple roots of the root system of $G_{F_{sep}}$ with respect to $T_{F_{sep}}$. The absolute Galois group $\Gal(F_{sep}/F)$ acts on the Dynkin diagram $\Delta(G)$ via the $\ast$-action. We assume $G$ is of $p$-inner type, that is the $\ast$-action becomes trivial over a field extension of $p$-power degree. 

A $G$-projective homogeneous variety $X$ is a $G$-variety over $F$, isomorphic over a separable closure $F_{sep}$ to a quotient $G_{F_{sep}}/P$ by a parabolic subgroup. Given a subset $\Theta$ of  $\Delta(G)$, there is a parabolic subgroup $P_{\Theta}$ of $G_{F_{sep}}$ whose Levi component has Dynkin diagram $\Delta(G)\setminus \Theta$. Note that in the literature, there are two opposite conventions for parabolic subgroups of type $\Theta$; here, a Borel subgroup has type $\Delta(G)$. The variety $G_{F_{sep}}/P_{\Theta}$ is defined over $F$ if and only if $\Theta$ is $\Gal(F_{sep}/F)$-invariant, leading to a bijection $\Theta \leftrightarrow X_{\Theta,G}$ between $\ast$-invariant subsets of $\Delta(G)$ and isomorphism classes of $G$-projective homogeneous varieties defined over $F$.

\begin{cor}\label{c1}
Let $X$ and $X'$ be two projective homogeneous varieties for $p$-inner semi-simple groups $G$ and $G'$, respectively.  Fix a minimal field extension $E/F$ such that both $G_E$ and $G'_E$ are of inner type.
Two direct summands $M$ of $M(X)$ and $N$ of $M(X')$ in $\M_F(\mz/p\mz)$ are isomorphic if and only if their Tate traces are isomorphic over the function fields of the corestrictions to $F$ of projective homogeneous varieties for $G_L$ and $G'_L$, for all intermediate field extensions $E/L/F$.
\end{cor}
The result follows immediatly from Theorem~\ref{main.thm} since $M$ and $N$ both admit splitting families contained in the family of fields considered in the corollary, see Example~\ref{phv.ex}. 
Note that this family of fields is not minimal; but its definition depends only on the groups $G$ and $G'$, and not on a decomposition of $M$ and $N$ as direct sums of indecomposable summands.

\begin{rem}Corollary \ref{c1} extends to the case of projective pseudo-homogeneous varieties (that is, for non-reduced parabolic subgroups) over perfect fields of characteristic $p>3$ by \cite{Srini}.
\end{rem}

Assume that $M(X_{\Theta,G})$ is a pure Tate motive for some $\Gal(F_{sep}/F)$-invariant subset $\Theta\subset \Delta(G)$. The $\mz/p\mz$-module $\Ch(X_{\Theta,G})$ is free and the isomorphism class of $M(X_{\Theta,G})$ is determined by the {\em Poincaré polynomial}
\[P\bigl(M(X_{\Theta,G}),t\bigr)=\sum_{i\geq 0}a_it^i,\]
where $a_i:=\dim(\Ch_i(X_{\Theta,G}))$. 
The motive $M(X_{\Theta,G})$ being pure Tate, this polynomial is invariant under field extensions and equal to $P(M(X_{\Theta,G_{F_{sep}}}),t)$. Hence, it does not depend on the coefficient ring $\mz/p\mz$, and admits a description in terms of the Weyl group of $G$ and the length function, see~\cite{Kock}. We call it the Poincaré polynomial of $X_{\Theta,G_{F_{sep}}}$.
The next corollary applies, notably but not exclusively, to varieties of Borel subgroups of a semi-simple group of inner type. 
\begin{cor}\label{critborel}
Let $X$ and $X'$ be two projective homogeneous varieties for semi-simple inner groups $G$ and $G'$ respectively. 
Assuming that $G_{F(X)}$ and $G'_{F(X')}$ are split, the following are equivalent :
\begin{enumerate}
\item[(i)] The motives of $X$ and $X'$ are isomorphic in $\M_F(\mz/p\mz)$;
\item[(ii)] The Poincaré polynomials of $X_{F_{sep}}$ and $X'_{F_{sep}}$ are equal and for all field extensions $L/F$, $G_L$ is split over a prime to $p$ extension of $L$ if and only if $G'_L$ is split over a (possibly different) prime to $p$ extension of $L$. 
\end{enumerate}
\end{cor}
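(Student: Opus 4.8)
The plan is to route the comparison through Theorem~\ref{main.thm}. Since $G$ and $G'$ are of inner type, $M(X)$ and $M(X')$ lie in $\IM{F}{\Lambda}$ by~\cite[Thm.~3.5]{K-inner}, so $M(X)\simeq M(X')$ if and only if $\Tr(M(X_L))\simeq\Tr(M(X'_L))$ for all field extensions $L/F$, and the first task is to compute $\Tr(M(X_L))$. I claim it is $0$ when $X_L$ is anisotropic, and isomorphic to $M(X_{F_{\mathrm{sep}}})$ --- which is pure Tate with Poincaré polynomial $P_X:=P(M(X_{F_{\mathrm{sep}}}),t)$ --- when $X_L$ is isotropic with coefficients in $\Lambda$. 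The anisotropic case is just the definition of the anisotropic part. For the isotropic case I would prove that an isotropic, generically split projective homogeneous variety has pure Tate motive: the hypothesis that $G_{F(X)}$ is split makes $M(X_{F(X)})$, hence $M(X_K)$ for $K$ the function field of $X_L$ (note $K\supseteq F(X)$), pure Tate; writing $M(X_L)=\bigoplus_j U_{Y_j}\{b_j\}$, legitimate since $M(X_L)\in\IM{L}{\Lambda}$, each $(U_{Y_j})_K$ is then pure Tate, so each $Y_j$ becomes isotropic over $K$ (its hook being $0$), yielding a multiplicity-$1$ correspondence $X_L\rightsquigarrow Y_j$; composing with a multiplicity-$1$ correspondence $\Spec L\rightsquigarrow X_L$ provided by the isotropy of $X_L$ shows every $Y_j$ is isotropic over $L$, whence $U_{Y_j}\simeq\Lambda\{0\}$ by Lemma~\ref{isotropy} and $M(X_L)$ is pure Tate. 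A pure Tate motive of a projective homogeneous variety carries the field-independent Poincaré polynomial of its split form, so $M(X_L)\simeq M(X_{F_{\mathrm{sep}}})$.

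Granting this, Theorem~\ref{main.thm} gives that $M(X)\simeq M(X')$ if and only if $M(X_{F_{\mathrm{sep}}})\simeq M(X'_{F_{\mathrm{sep}}})$ and, for every $L/F$, $X_L$ is isotropic with coefficients in $\Lambda$ exactly when $X'_L$ is. (For ``only if'', specialize the equality of higher Tate traces to $L=F_{\mathrm{sep}}$ for the first condition, and use Example~\ref{TateVariety.ex} for the second; ``if'' is immediate from the first paragraph.) Since pure Tate motives are classified by their Poincaré polynomials, the first condition is precisely the equality of the Poincaré polynomials of $X_{F_{\mathrm{sep}}}$ and $X'_{F_{\mathrm{sep}}}$ appearing in~(ii), and it remains to identify the second condition with the splitting condition in~(ii).

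For this I would bring in the variety of Borel subgroups $\mathcal B_G$ of $G$, which is generically split because $G$ is of inner type. The hypothesis that $G_{F(X)}$ is split gives $X\succeq_\Lambda\mathcal B_G$, since over $F(X)$ the variety $\mathcal B_G$ acquires a rational point; conversely $\mathcal B_G\succeq_\Lambda X$ holds unconditionally, since over $F(\mathcal B_G)$ the group $G$ is quasi-split --- hence split, being inner --- so $X$ is split there. Thus $X\approx_\Lambda\mathcal B_G$, and the characterization of domination through Tate traces, together with Example~\ref{TateVariety.ex}, shows that $X_L$ is isotropic with coefficients in $\Lambda$ if and only if $\mathcal B_{G,L}$ is, for every $L/F$. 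Finally I would verify the dictionary: for an inner group $H$ over $F$ and any $E/F$, $\mathcal B_{H,E}$ is isotropic with coefficients in $\Lambda$ if and only if $H_E$ splits over some prime-to-$p$ extension of $E$. ``If'' is clear: a rational point of $\mathcal B_{H,E'}$ over a prime-to-$p$ splitting extension $E'/E$ gives a closed point of $\mathcal B_{H,E}$ of prime-to-$p$ degree, hence a $0$-cycle of degree invertible in $\Lambda$. For ``only if'', such a $0$-cycle must involve a closed point $x$ with $[\kappa(x):E]$ prime to $p$, and then $H_{\kappa(x)}$ has a Borel subgroup over $\kappa(x)$, so is quasi-split, hence split (being inner). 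Applying the dictionary to $(G,X)$ and, symmetrically, to $(G',X')$ --- this is where $G'_{F(X')}$ split is used --- the isotropy condition on $X$ and $X'$ turns into: for every $E/F$, $G_E$ splits over a prime-to-$p$ extension of $E$ if and only if $G'_E$ does. This completes the equivalence of~(i) and~(ii).

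The main obstacle, and the step I would handle most carefully, is the first one: showing that an isotropic, generically split projective homogeneous variety has pure Tate motive, not merely a non-zero Tate trace. That purity is what collapses the higher Tate trace to the single datum ``isotropic or not'', read against the fixed split Poincaré polynomial; with it in place, everything downstream --- the reduction to $\mathcal B_G$ via the generic-splitting hypothesis, and the Borel-variety/prime-to-$p$-splitting dictionary --- is comparatively formal.
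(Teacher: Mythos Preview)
Your proof is correct and follows the same overall strategy as the paper: both reduce to Theorem~\ref{main.thm}, both establish that for a generically split projective homogeneous variety the motive over any extension is either anisotropic or pure Tate, and both translate isotropy of $X$ into prime-to-$p$ splitting of $G$. The differences are organizational. First, the paper asserts the ``anisotropic or pure Tate'' dichotomy in one line (invoking generic splitting and a reduction to $\Lambda=\mz/p\mz$ via Proposition~\ref{coeff.prop}), whereas you supply an explicit argument via the decomposition of $M(X_L)$ into initial motives and transitivity of domination; your version is more self-contained and avoids the change-of-coefficients step. Second, for the dictionary between isotropy of $X$ and prime-to-$p$ splitting of $G$, the paper argues directly: a prime-to-$p$ point on $X$ makes the function field extension purely transcendental, so $G$ splits there. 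You instead pass through the Borel variety, using $X\approx_\Lambda\mathcal B_G$ and then reading off splitting from a prime-to-$p$ point on $\mathcal B_G$. Both routes are short; yours trades the purely-transcendental observation for the equivalence $X\approx_\Lambda\mathcal B_G$, which is immediate from the hypothesis $G_{F(X)}$ split.
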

\begin{proof}
For any field extension $L/F$, the function field $L(X_L)$ is purely transcendental if and only if $X_L$ has a rational point. 
Therefore, since $G_{F(X)}$ is split, $G_L$ is split if $X_L$ has a rational point. 
It follows that $G$ is split by a prime to $p$ extension of $L$ if and only if $X_L$ is isotropic mod $p$, or equivalently $M(X_L)$ has non trivial Tate trace. Therefore condition $(ii)$ holds if and only if the Poincaré polynomials of $X_{F_{sep}}$ and $X'_{F_{sep}}$ are equal and 
for all field extensions $L/F$, the Tate traces of $M(X_L)$ and $M(X'_L)$ are either both trivial of both non-trivial.
The implication      
$(i)\Rightarrow (ii)$ follows immediately: if $M(X)$ and $M(X')$ are isomorphic, they have the same Poincaré polynomials over $F_{sep}$ and isomorphic Tate traces over all field extensions $L/F$. \\
Let us now prove the converse. As $G$ is split over the function field $F(X)$, the variety $X$ is maximal among projective homogeneous $G$-varieties with respect to dominance mod $p$. The main result of \cite{K-inner} thus implies that a complete motivic decomposition of $M(X)$ is given by shifts of the upper motive $U_X$. This holds for $X_L$, for any field extension $L/F$. In particular, depending on $U_{X_L}$ being Tate or not, the motive $M(X_L)$ is either anisotropic or pure Tate. In the latter case, $M(X_L)$ is determined by the Poincaré polynomial $P(X_{F_{sep}},t)$. The same holds for $X'$, and as the Poincaré polynomial of $X_{F_{sep}}$ and $X'_{F_{sep}}$ are equal, and $M(X_L)$ and $M(X'_L)$ are pure Tate motives over the same fields $L/F$, it remains to cast Theorem \ref{main.thm}.
\end{proof}

\begin{rem}
If $G$, $G'$ are inner forms of the same quasi-split group and $X$ and $X'$ are of the same type, we may drop the assumption on the Poincaré polynomials of $X_{F_{sep}}$ and $X'_{F_{sep}}$. This applies for instance to Severi-Brauer varieties of the same dimension, or varieties of Borel subgroups for two semi-simple groups of the same type and rank.
\end{rem}

\begin{ex} Let $G$ be a semi-simple group of type $G_2$ (for instance, the automorphism group of a Cayley algebra \cite[§31]{KMRT}) and set $p=2$. The projective homogeneous varieties $X_{\{1\},G}$ and $X_{\{2\},G}$ both have Poincaré polynomial $\sum_{i=0}^5t^i$ over $F_{sep}$. 
Furthermore, since a group of type $G_2$ is either split or anisotropic, see~\cite{T}, $G$ is split over $F(X_{\{i\},G})$ for $i=1,2$. 
By Corollary \ref{critborel}, the motives $M(X_{\{1\},G})$ and $M(X_{\{2\},G})$ are thus isomorphic in $\M_F(\mz/2\mz)$. This was originally noticed by Bonnet~\cite{Bo}.
\end{ex}

Assorted with the description of the Tits $p$-indexes of semi-simple groups, Corollary \ref{critborel} produces numerous motivic isomorphisms through cohomological invariants of semi-simple groups (see \cite[Def.\,4, \S IV.3]{dCG} for the definitions of the invariants $b(.)$ and $f_3(.)$).

\begin{cor}\label{cohinv}
Let $p$ be a prime. Assume that $G$ and $G'$ are of the same type as listed below and that $X$ and $X'$ are anisotropic projective $G$-homogeneous and $G'$-homogeneous varieties, also of the same type. Considering motives with coefficients in $\mz/p\mz$, we get
\begin{itemize}
\item[-] ($F_4$ and $p=3$) or ($E_8$ and $p=5$): $M(X)\simeq M(X')$ if and only if $b(G)$ and $b(G')$ generate the same subgroup of $H^3(F,\mathbb{Z}/p\mathbb{Z}(2))$;
\item[-] ($G_2$ and $p=2$): $M(X)\simeq M(X')$ if and only if $G$ and $G'$ are isomorphic;
\item[-] ($^1\!E_6$ and $p=2$): $M(X)\simeq M(X')$  if and only if $f_3(G)=f_3(G')$;
\item[-] ($E_7$ and $p=3$): $M(X)\simeq M(X')$ if and only if \\$b(G)=\pm b(G')\in H^3(F,\mathbb{Z}/3\mathbb{Z}(2))$. 
\end{itemize}
\end{cor}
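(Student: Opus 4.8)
The plan is to deduce all four assertions uniformly from Corollary~\ref{critborel}, by recognizing that, for each listed pair (type, prime), the splitting of the group by a prime-to-$p$ extension is governed by a single degree-three cohomological invariant which is, moreover, a symbol. First I would use Proposition~\ref{coeff.prop} to reduce to coefficients $\Lambda=\mathbb{Z}/p\mathbb{Z}$. Each group in the list is of inner type ($^1\!E_6$ by hypothesis, and $F_4$, $G_2$, $E_7$, $E_8$ because their Dynkin diagrams admit no nontrivial automorphism), and $G$, $G'$ are inner forms of the same quasi-split group while $X$, $X'$ are of the same type, so the Remark following Corollary~\ref{critborel} makes the hypothesis on Poincaré polynomials automatic. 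One then records that, in each of these cases, the anisotropic projective homogeneous varieties under consideration are generically split, so that $G_{F(X)}$ and $G'_{F(X')}$ are split by prime-to-$p$ extensions; this is precisely where the description of the Tits $p$-indexes of these exceptional groups enters, cf.~\cite{dCG} and the references given there. Granting this, Corollary~\ref{critborel} reduces the claim to the assertion that $M(X)\simeq M(X')$ if and only if, for every field extension $E/F$, the group $G_E$ is split by a prime-to-$p$ extension of $E$ exactly when $G'_E$ is.

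The second step is to translate this splitting condition into the vanishing of a cohomological invariant. A maximal prime-to-$p$ extension $E'$ of a field $E$ has pro-$p$ absolute Galois group and contains every finite extension of $E$ of degree prime to $p$, so every $\ell$-primary obstruction with $\ell\neq p$ dies over $E'$; in particular the Tits algebras of $G_E$, which in each of the five cases carry torsion only away from $p$, become trivial over $E'$. By the classification of the Tits $p$-indexes of $F_4$ $(p=3)$, $E_8$ $(p=5)$, $G_2$ $(p=2)$, $^1\!E_6$ $(p=2)$ and $E_7$ $(p=3)$, the group $G_E$ then becomes split over $E'$ if and only if the relevant degree-three invariant vanishes over $E$: this is $b(G_E)\in H^3(E,\mathbb{Z}/p\mathbb{Z}(2))$ in the $F_4$, $E_7$ and $E_8$ cases, and $f_3(G_E)\in H^3(E,\mathbb{Z}/2\mathbb{Z})$ in the $G_2$ and $^1\!E_6$ cases (see~\cite{dCG}). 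Hence $M(X)\simeq M(X')$ if and only if $b(G)$ and $b(G')$ (resp.\ $f_3(G)$ and $f_3(G')$) have the same set of splitting fields.

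The final step is to invoke the rigidity of symbols. In each of these cases the invariant in question is a symbol in degree-three Galois cohomology, and a nonzero symbol $\alpha$ and a symbol $\beta$ have the same set of splitting fields if and only if $\langle\alpha\rangle=\langle\beta\rangle$: one inclusion is functoriality, and for the other, $\alpha$ dies over the function field of its norm variety, hence so does $\beta$, and two symbols of the same degree each of which is split by the other's norm variety are proportional --- for $p=2$ this is the classical statement that two $3$-fold Pfister forms, each hyperbolic over the function field of the other, are isometric. Since $\langle\alpha\rangle=\langle\beta\rangle$ amounts to $\beta=\pm\alpha$ when $p=3$ and to $\beta=\alpha$ when $p=2$, this yields exactly the stated criteria, phrased with subgroups so as to treat $p=3$ and $p=5$ uniformly in the $F_4$ and $E_8$ cases. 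For $G_2$ one concludes by recalling that a group of type $G_2$ is determined up to isomorphism by its octonion algebra, hence by the $3$-fold Pfister norm form of the latter, hence by $f_3$, so that $f_3(G)=f_3(G')$ is equivalent to $G\simeq G'$.

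No genuinely new argument is needed beyond Corollary~\ref{critborel}; the work lies in the correct assembly of imported facts, and the two delicate points --- which I expect to be the main obstacle --- are (i) checking that the anisotropic projective homogeneous varieties under consideration are generically split, which rests on the precise shape of the Tits $p$-index in each of these exceptional cases, and (ii) verifying that prime-to-$p$ splitting of these groups is detected by a single degree-three class which is a symbol, so that symbol rigidity applies. Both are quoted from the structure theory of exceptional algebraic groups and from the behaviour of symbols under function fields of norm varieties.
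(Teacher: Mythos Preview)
Your approach is correct and essentially the paper's: verify the hypotheses of Corollary~\ref{critborel}, apply it to reduce to the condition that $G_E$ and $G'_E$ are split by prime-to-$p$ extensions over the same fields $E/F$, and then translate this via the degree-three invariants from~\cite{dCG}. Where the paper simply cites the relevant results in~\cite{dCG} (their Corollaries~10 and~14 and Proposition~15) for that last step, you unpack the underlying symbol-rigidity argument explicitly; this is equivalent and arguably more transparent.

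One small caution on step~(i): Corollary~\ref{critborel} as stated asks that $G_{F(X)}$ be \emph{split}, not merely split after a prime-to-$p$ extension, and the Tits $p$-index argument only yields the latter directly. You should either strengthen the claim to actual splitting of $G_{F(X)}$ (as the paper does, again quoting~\cite{dCG}), or observe that the proof of Corollary~\ref{critborel} in fact only uses that $M(X_E)$ is pure Tate whenever $X_E$ is isotropic with coefficients in~$\Lambda$, which is what generic $p$-splitting gives. Also, your preliminary reduction to $\Lambda=\mathbb{Z}/p\mathbb{Z}$ via Proposition~\ref{coeff.prop} is harmless but unnecessary, since Corollary~\ref{critborel} already handles arbitrary finite local~$\Lambda$.
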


\begin{proof}
As $X$, $X'$ and $G$, $G'$ are of the same type, the Poincaré polynomials of $X_{F_{sep}}$ and $X'_{F_{sep}}$ are equal. Furthermore the analysis of the Tits $p$-indexes of \cite{dCG} shows that in all the case considered, $G_{F(X)}$ and $G'_{F(X')}$ are split. Corollary \ref{critborel} thus states that $M(X)$ and $M(X')$ are isomorphic if and only if for any field extension $E/F$, $G_E$ is split by a prime-to-$p$ field extension if and only if $G'_E$ is split by a (possibly different) prime-to-$p$ extension. The cohomological characterizations for this property are given in \cite{dCG}.
\end{proof}

\subsection{Motivic equivalence of semi-simple algebraic groups}

The Tits index of a semi-simple algebraic group $G$ over $F$ is a fundamental invariant, which describes the isotropy of $G$. For classical groups, it encompasses classical invariants of algebraic structures over fields, such as the Schur index of central simple algebras or the Witt index of quadratic forms.
It has been known for a while that the isomorphism classes of motives of $G$-projective homogeneous varieties determine the Tits-index of $G$, up to prime-to-$p$ base change. The main result of~\cite{dC} provides a converse. Theorem~\ref{motequivpart} below generalizes this result. The proof, based on Theorem~\ref{main.thm}, is independant of~\cite{dC}. 

We use the same notations as in~\cite[\S I.1]{dCG}. In particular, $\Delta(G)$ denotes the Dynkin diagram of $G$ as well as its set of vertices. The distinguished vertices form a subset denoted by $\delta_0(G)$. If $G$ is of inner type, $\delta_0(G)$ consists of the vertices $\alpha\in\Delta(G)$ such that $X_{\{\alpha\}}$ has a rational point. With our convention for projective homogeneous varieties, $\delta_0(G)$ is empty when $G$ is anisotropic, and coincides with $\Delta(G)$ when $G$ is quasi-split. 

\begin{defi}
Let $G$ and $G'$ be semi-simple algebraic groups over a field $F$, which are inner forms of the same split group. We say that $G$ and $G'$ are motivic equivalent mod $p$ if there is an isomorphism of diagrams$$\varphi:\Delta(G)\longrightarrow \Delta(G'),$$ such that for any subset $\Theta$ of $\Delta(G)$, the motives $M(X_{\Theta,G})$ and $M(X_{\varphi(\Theta),G'})$ are isomorphic with coefficients in $\mz/p\mz$.
\end{defi}

\begin{ex} Consider two central simple algebras $A$ and $A'$ of the same dimension. The groups ${\mathrm {SL}}_1(A)$ and ${\mathrm{SL}}_1(A')$ are motivic equivalent mod $p$ if and only if for any sequence of integers $d_1 < ... < d_k$ the motives of the varieties of flags of right ideals in $A$ and $A'$ of reduced dimensions $d_1,..,d_k$ are isomorphic in $\M_F(\mz/p\mz)$, or the same condition holds after replacing $A'$ by its opposite algebra $A'^{\mathrm{op}}$. A similar statement holds for special orthogonal groups associated to quadratic forms of the same dimension and trivial discriminant, replacing flags of ideals by flags of isotropic subspaces, and adjusting the labelling of the two components of the maximal orthogonal grassmanians if necessary.
\end{ex}

Motivic equivalence of semi-simple algebraic groups aims at giving a classification of these with respect to the motives of their respective projective homogeneous varieties. In view of Corollary \ref{c1}, these motives are determined up to isomorphism by their higher Tate trace. Isotropy of semi-simple groups is controlled by the Tits indexes but in order to prove results in $\M_F(\mz/p\mz)$, one should rather consider their values over $p$-special fields, the Tits $p$-indexes.

A field is $p$-special if any of its finite extensions has degree a power of $p$. For any field $F$ there is a $p$-special closure of $F$, that is a minimal $p$-special field extension $F_p/F$. The $p$-special closures of $F$ are algebraic over $F$ and isomorphic \cite[§101.B]{EKM}. The Tits $p$-index of a semi-simple algebraic group $G$ over $F$ is defined as the Tits index of $G_{F_p}$, where $F_p$ is a $p$-special closure of $F$. The admissible values of the Tits $p$-indexes over fields are determined in \cite{dCG}.

\begin{lem}\label{pspecial}
Let $M\in \IM{F}{\mz/p\mz}$ and $F_p/F$ be a $p$-special closure of $F$. The Tate trace of $M$ is isomorphic over $F_p$ to the Tate trace of $M_{F_p}$.
\end{lem}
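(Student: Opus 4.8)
The plan is to reduce the statement to the behaviour of isotropy of varieties under passage to a $p$-special closure. First I would write $M=\Tr(M)+M_{an}$ with $M_{an}$ anisotropic, the unique such decomposition afforded by the Krull-Schmidt property. Base changing, $M_{F_p}=\Tr(M)_{F_p}+(M_{an})_{F_p}$, and $\Tr(M)_{F_p}$ is again a pure Tate motive, hence a pure Tate summand of $M_{F_p}$. So it is enough to prove that $(M_{an})_{F_p}$ contains no Tate summand: granting this, $M_{F_p}=\Tr(M)_{F_p}+(M_{an})_{F_p}$ exhibits $M_{F_p}$ as (pure Tate) $+$ (anisotropic), and the uniqueness of such a decomposition forces $\Tr(M_{F_p})\simeq\Tr(M)_{F_p}$.

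To handle $(M_{an})_{F_p}$ I would fix a decomposition $M_{an}=\sum_{k=1}^r U_{X_k}\{i_k\}$ into initial motives; since $M_{an}$ is anisotropic, Lemma~\ref{isotropy} tells us no $X_k$ is isotropic with coefficients in $\Lambda$ over $F$. If $\Lambda\{j\}$ were a direct summand of $(M_{an})_{F_p}=\sum_k (U_{X_k})_{F_p}\{i_k\}$, then composing an inclusion and a retraction of $\Lambda\{j\}$ with the projections onto the summands and using that $\End(\Lambda\{j\})=\Lambda$ is local, a Tate motive would split off one of the $(U_{X_k})_{F_p}$, hence off $M\bigl((X_k)_{F_p}\bigr)$. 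Since $(X_k)_{F_p}$ is equidimensional, the argument proving $(iii)\Rightarrow(i)$ in Lemma~\ref{isotropy} still produces a $0$-cycle of degree $1$, so $(X_k)_{F_p}$ would be isotropic with coefficients in $\Lambda$ over $F_p$. It thus remains to exclude this.

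The crux is the following general fact: an $F$-variety $X$ is isotropic with coefficients in $\Lambda$ over $F$ if and only if $X_{F_p}$ is isotropic with coefficients in $\Lambda$ over $F_p$. Over any field $K$, isotropy with coefficients in $\Lambda$ is equivalent to the existence of a closed point of degree prime to $p$, because $p$ lies in the maximal ideal of $\Lambda$; over the $p$-special field $F_p$, where every closed point has $p$-power degree, this just means $X(F_p)\neq\emptyset$. Now $F_p/F$ is the filtered union of its finite subextensions, all of which have degree prime to $p$, and $X$ is of finite type, so $X(F_p)\neq\emptyset$ precisely when $X(F_i)\neq\emptyset$ for some such $F_i$; in that case the image of $\Spec F_i$ in $X$ is a closed point whose degree divides $[F_i:F]$ and is therefore prime to $p$ over $F$. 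Conversely, if $X$ has a closed point $x$ of degree $d$ prime to $p$ over $F$, then $F(x)/F$ is separable, $F(x)\otimes_F F_p$ is a product of finite separable extensions of $F_p$ whose degrees are powers of $p$ summing to $d$, and since $d$ is prime to $p$ one of these extensions is trivial, producing an $F_p$-point of $X$.

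Feeding $X=X_k$ into this fact contradicts the isotropy of $(X_k)_{F_p}$ obtained above, so $(M_{an})_{F_p}$ has no Tate summand and the lemma follows. I expect the only real content to be this last fact — the passage between closed points of prime-to-$p$ degree over $F$ and $F_p$-rational points, which relies on the supernatural degree of $F_p/F$ being prime to $p$ — while the reduction to it is formal once one has the Krull-Schmidt property and the uniqueness of the anisotropic decomposition at hand.
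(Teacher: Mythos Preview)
Your approach coincides with the paper's: both reduce the lemma to the assertion that an $F$-variety $X$ has a $0$-cycle of degree prime to $p$ if and only if $X_{F_p}$ has a rational point, and both prove the substantive direction (from an $F_p$-point down to a prime-to-$p$ closed point over $F$) by spreading out over a finite subextension of $F_p/F$, all of which have prime-to-$p$ degree. Your reduction via the anisotropic decomposition $M=\Tr(M)+M_{an}$ and the local-endomorphism-ring trick for locating the Tate summand inside one $(U_{X_k})_{F_p}$ is more explicit than the paper's one-line ``boils down to'', but it is the same reduction.

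There is one small slip in your treatment of the converse direction. You claim that a closed point $x\in X$ with $[F(x):F]=d$ prime to $p$ has $F(x)/F$ separable; this fails when $\operatorname{char}F=\ell\notin\{0,p\}$, since the inseparable degree is then a power of $\ell$, automatically prime to $p$ but possibly larger than $1$, so $F(x)\otimes_F F_p$ need not be a product of fields. The conclusion survives by a simpler route: base-changing the $0$-cycle $[x]$ to $X_{F_p}$ gives a $0$-cycle of degree $d$, while every closed point of $X_{F_p}$ has $p$-power degree over the $p$-special field $F_p$; hence the image of $\deg\colon\Ch_0(X_{F_p})\to\mz$ is $p^a\mz$ for some $a\geq 0$, and containing the prime-to-$p$ integer $d$ forces $a=0$, producing a closed point of degree $1$. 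The paper simply declares this direction ``obvious''.
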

\begin{proof}
As $M$ decomposes as a sum of initial motives, it is enough to check that an indecomposable summand $U_X\{k\}$ which is not a Tate motive remains anisotropic over $F_p$. Let $X$ be a variety and assume $X_{F_p}$ is isotropic mod $p$, that is admits a $0$-cycle of prime-to-$p$ degree. Since $F_p$ is a $p$-special field, $X_{F_p}$ actually has a rational point. We claim this implies $X$ is isotropic mod $p$, which concludes the proof by Lemma~\ref{isotropy}. 

To prove the claim, assume that $X_{F_p}$ has a rational point $x$. Replacing $X_{F_p}$ by an affine open subscheme containing $x$, we may assume that $X_{F_p}=\Spec(A_{F_p})$ for some finitely generated $F$-algebra $A$ so that $x$ corresponds to a morphism $A_{F_p}\longrightarrow F_p$ of $F$-algebras. The image of $A$ is a finite $F$-algebra contained in $F_p$, hence a finite field extension $L$ of $F$. The variety $X_L$ has a rational point, and by definition of $p$-special closure, $L/F$ is of prime-to-$p$ degree, giving rise to the needed $0$-cycle on $X$. The converse is obvious. 
\end{proof}
We now prove the main result of this section: 
\begin{thm}\label{motequivpart}
    Let $G$ and $G'$ be semi-simple algebraic groups over a field $F$, which are inner twisted forms of the same split group. Let $\Theta_0$ be a subset of $\Delta(G)$, and consider an isomorphism of Dynkin diagrams
    $$\varphi:\Delta(G)\longrightarrow \Delta(G').$$
The following are equivalent:\\
    
     \begin{enumerate}
        \item[(i)] For any projective homogeneous variety $X_{\Theta,G}$ with $\Theta\supset\Theta_0$, the motives $M(X_{\Theta,G})$ and $M(X_{\varphi(\Theta),G'})$ are isomorphic in $\M_F(\mz/p\mz)$.\\
        \item[(ii)] For any $p$-special field extension $L/F$, $\Theta_0$ is distinguished in $\Delta(G_L)$ if and only if $\varphi(\Theta_0)$ is distinguished in $\Delta(G'_L)$. Moreover, when this holds, $\varphi$ induces a bijection between $\delta_0(G_L)$ and $\delta_0(G'_L)$. 
    \end{enumerate}
\end{thm}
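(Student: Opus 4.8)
I would translate both conditions into statements about Tate traces over $p$-special fields, and compute those traces by means of the Chernousov--Gille--Merkurjev decomposition. \emph{Reductions.} By Proposition~\ref{coeff.prop} one may assume $\Lambda=\mz/p\mz$. Since $\varphi$ is $\ast$-compatible, $\varphi(\Theta)$ is Galois-invariant whenever $\Theta$ is, so both $M(X_{\Theta,G})$ and $M(X_{\varphi(\Theta),G'})$ belong to $\IM{F}{\Lambda}$ by~\cite{K-outer}; applying Theorem~\ref{main.thm} to each pair, condition (i) becomes equivalent to: for every Galois-invariant $\Theta\supseteq\Theta_0$ and every field extension $L/F$, $\Tr\bigl(M(X_{\Theta,G_L})\bigr)\simeq\Tr\bigl(M(X_{\varphi(\Theta),G'_L})\bigr)$. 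A pure Tate motive is determined up to isomorphism by its Poincaré polynomial, which is insensitive to scalar extension; together with Lemma~\ref{pspecial} this reduces us to $p$-special $L$. Finally, over a $p$-special field a variety has a $0$-cycle of degree prime to $p$ exactly when it has a rational point (as in the proof of Lemma~\ref{pspecial}), so by Lemma~\ref{isotropy} and Example~\ref{TateVariety.ex} one has $\Tr(M(X_{\Theta,G_L}))\neq 0$ precisely when $\Theta\subseteq\delta_0(G_L)$.

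\emph{The Tate trace via \cite{CGM,Bros}.} The core step is the claim that, for $G$ of $p$-inner type, $L/F$ $p$-special and $\Theta$ a $\Gal(L)$-invariant subset of $\Delta(G)$, the isomorphism class of $\Tr(M(X_{\Theta,G_L}))$ depends only on the datum $\bigl(\Delta(G),\text{ the }\ast\text{-action of }\Gal(L),\Theta,\delta_0(G_L)\bigr)$, and not otherwise on $G_L$ (in particular not on its anisotropic kernel). To prove it I would take a minimal parabolic $P=P_{\delta_0(G_L)}$ of $G_L$, with Levi $L'$ whose semisimple part is the anisotropic kernel of $G_L$, let a generic one-parameter subgroup of the centre of $L'$ act on $X_{\Theta,G_L}$, and apply the Bialynicki-Birula-Hesselink-Iversen decomposition in Brosnan's form~\cite{Bros} (equivalently the decomposition of~\cite{CGM}) to obtain $M(X_{\Theta,G_L})\simeq\bigoplus_i M(Y_i)\{c_i\}$, the $Y_i$ being the connected components of the fixed locus — each projective $L'$-homogeneous — and the $c_i$ codimensions of the corresponding attracting cells, all of which are read off from double cosets in the Weyl group of $G$ relative to the subsets $\Theta$ and $\delta_0(G_L)$. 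Since the semisimple part of $L'$ is anisotropic, $\Spec L$ is the only projective $L'$-homogeneous variety with a rational point, hence over the $p$-special field $L$ the only one with non-trivial Tate trace; therefore
\[\Tr\bigl(M(X_{\Theta,G_L})\bigr)\ \simeq\ \bigoplus_{i\,:\,Y_i=\Spec L}\Lambda\{c_i\},\]
a pure Tate motive depending only on the stated combinatorial datum. Verifying that the pure Tate summands are exactly those coming from the point-components of the fixed locus, and controlling the combinatorics of the $Y_i$ and of the twists $c_i$, is the main obstacle.

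\emph{Proof of $(ii)\Rightarrow(i)$.} As $G$ and $G'$ are inner forms of the same quasi-split group, $\varphi$ is an automorphism of their common Dynkin diagram intertwining the $\ast$-action (hence the $\ast$-action of $\Gal(L)$ for every $p$-special $L$), and it transports the Weyl group and the length function. Fix a $p$-special $L$ and a Galois-invariant $\Theta\supseteq\Theta_0$. If $\Theta_0\not\subseteq\delta_0(G_L)$, then $\Theta\not\subseteq\delta_0(G_L)$ and, by (ii), $\varphi(\Theta_0)\not\subseteq\delta_0(G'_L)$, whence $\varphi(\Theta)\not\subseteq\delta_0(G'_L)$; both Tate traces are trivial. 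If $\Theta_0\subseteq\delta_0(G_L)$, then (ii) gives $\varphi(\delta_0(G_L))=\delta_0(G'_L)$, so $\varphi$ carries the combinatorial datum attached to $X_{\Theta,G_L}$ onto that attached to $X_{\varphi(\Theta),G'_L}$, and the previous paragraph yields $\Tr(M(X_{\Theta,G_L}))\simeq\Tr(M(X_{\varphi(\Theta),G'_L}))$. The reductions then give (i).

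\emph{Proof of $(i)\Rightarrow(ii)$.} Applying (i) with $\Theta=\Theta_0$ and reversing the reductions, for every $p$-special $L$ one gets: $\Theta_0$ distinguished in $\Delta(G_L)\iff\Tr(M(X_{\Theta_0,G_L}))\neq 0\iff\Tr(M(X_{\varphi(\Theta_0),G'_L}))\neq 0\iff\varphi(\Theta_0)$ distinguished in $\Delta(G'_L)$, which is the first assertion of (ii). Assume now $\Theta_0\subseteq\delta_0(G_L)$. By the classification of admissible Tits $p$-indexes of $p$-inner groups~\cite{dCG}, $\delta_0(G_L)$ and $\delta_0(G'_L)$ are unions of $\ast$-orbits of $\Gal(F)$. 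For each $\ast$-orbit $O\subseteq\Delta(G)$ the set $\Theta_0\cup O$ is Galois-invariant, contains $\Theta_0$, and satisfies $\Theta_0\cup O\subseteq\delta_0(G_L)\iff O\subseteq\delta_0(G_L)$; applying (i) to this $\Theta$ gives $O\subseteq\delta_0(G_L)\iff\varphi(O)\subseteq\delta_0(G'_L)$. Taking the union over the orbits contained in $\delta_0(G_L)$ yields $\varphi(\delta_0(G_L))=\delta_0(G'_L)$, which finishes (ii). The delicate points here are the passage to $p$-special fields (Lemma~\ref{pspecial}) and the fact, coming from~\cite{dCG}, that over such fields the distinguished set of a $p$-inner group remains $\ast$-invariant; granting the Tate-trace computation of the second paragraph, the rest is formal.
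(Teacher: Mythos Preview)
Your proposal is correct but takes a genuinely different route from the paper in both directions. For $(ii)\Rightarrow(i)$, the paper makes the same reduction via Theorem~\ref{main.thm} and Lemma~\ref{pspecial} to Tate traces over $p$-special fields, but then argues by induction on the common rank of $G$ and $G'$: once $X_{\Theta,G_E}$ has a rational point, the authors apply the Chernousov--Gille--Merkurjev/Brosnan decomposition with respect to the parabolic of type~$\Theta$ itself, obtaining $M(X_{\Theta,G_E})\simeq\bigoplus_\delta M(Z_{\delta,G_{E,\Theta}})\{l(\delta)\}$ with $G_{E,\Theta}$ the semisimple Levi of strictly smaller rank; the induction hypothesis (now with $\Theta_0=\emptyset$) shows $G_{E,\Theta}$ and $G'_{E,\varphi(\Theta)}$ are fully motivically equivalent, so the whole motives---not merely their Tate traces---agree over $E$. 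You instead decompose with respect to a \emph{minimal} parabolic, so that the pieces are homogeneous under the anisotropic kernel and the Tate trace is read off directly from the point-components; this avoids the induction and makes the combinatorial nature of the Tate trace explicit, at the cost of the bookkeeping you flag as ``the main obstacle''. For $(i)\Rightarrow(ii)$, the paper applies condition~(i) just twice, to $\Theta=\delta_0(G_E)$ and to $\Theta=\varphi^{-1}(\delta_0(G'_E))$, obtaining the two inclusions $\varphi(\delta_0(G_E))\subset\delta_0(G'_E)$ and $\delta_0(G'_E)\subset\varphi(\delta_0(G_E))$ directly, whereas you probe one $\ast$-orbit at a time via $\Theta=\Theta_0\cup O$. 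Both arguments require that $\delta_0(G_E)$ be a union of $\Gal(F)$-orbits so that the chosen $\Theta$ is admissible in~(i); the paper uses this tacitly while you invoke~\cite{dCG} explicitly.
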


\begin{proof}
$(i)\Rightarrow (ii)$ By assumption the motives $M(X_{\Theta_0,G})$ and $M(X_{\varphi(\Theta_0),G'})$ are isomorphic, hence so are  $M(X_{\Theta_0,G_L})$ and $M(X_{\varphi(\Theta_0),G'_L})$ for all field extensions $L/F$. 
It follows their upper motives are isomorphic. For any field extension $L/F$, $X_{\Theta_0,G_L}$ thus has a $0$-cycle of prime-to-$p$ degree if and only if $X_{\varphi(\Theta_0),G'_L}$ as well. For $p$-special $L/F$, this corresponds to these varieties having a rational point and $\Theta_0$ is distinguished for $\Delta(G_L)$ if and only if $\varphi(\Theta_0)$ is for $\Delta(G'_L)$.

Assume now that $L/F$ is a $p$-special field and $\Theta_0$ is distinguished in $\Delta(G_L)$, which means $\delta_0(G_L)$ contains $\Theta_0$. With the same reasoning as above, assumption $(i)$ with $\Theta=\delta_0(G_L)$ implies that $\varphi(\delta_0(G_L))$ is distinguished in $\Delta(G'_L)$, therefore $\varphi(\delta_0(G_L))\subset \delta_0(G'_L)$. Finally the same proof with $\varphi^{-1}$ and $\varphi(\delta_0(G_L))$ shows that $\varphi^{-1}(\delta_0(G'_L))\subset \delta_0(G_L)$, whence $\delta_0(G'_L)=\varphi(\delta_0(G_L))$. The isomorphism $\varphi$ then identifies the Tits indexes of $G_L$ and $G'_L$.\\

\noindent $(ii)\Rightarrow (i)$ Let $\Theta$ be a subset of $\Delta(G)$ containing $\Theta_0$. We now show that the higher Tate traces of $X_{\Theta,G}$ and $X_{\varphi(\Theta),G'}$ are isomorphic. By Lemma \ref{pspecial}, we only need to work over $p$-special fields containing $F$.\\

Let $L/F$ be a $p$-special field. Since $\Theta$ contains $\Theta_0$, if $\Theta$ is distinguished in $\Delta(G_L)$, then $\Theta_0$ also is and the second part of condition $(ii)$ holds. Therefore, $\Theta$ is contained in $\delta_0(G_L)$ if and only if $\varphi(\Theta)$ is contained in $\delta_0(G'_L)$, that is $X_{\Theta, G_L}$ has a rational point if and only if $X_{\varphi(\Theta),G'_L}$ also does. Since $L$ is $p$-special, having a rational point is equivalent to being isotropic mod $p$ and we get that the varieties $X_{\Theta,G_L}$ and $X_{\varphi(\Theta),G'_L}$ have non-trivial Tate traces over the same $p$-special fields $L/F$. \\

We now proceed by induction on the common rank $n$ of $G$ and $G'$ (that is, the number of vertices of their Dynkin diagrams). 
If 
$n=1$, then, up to a central isogeny, $G$ and $G'$ are respectively isomorphic ${\mathrm {SL}}_1(Q)$ and ${\mathrm{SL}}_1(Q')$ for some $F$-quaternion algebras $Q$ and $Q'$, and we only have to show that under condition $(ii)$, the motives of the Severi-Brauer varieties of $Q$ and $Q'$ are isomorphic. If $p$ is odd, the motives of $\SB(Q)$ and $\SB(Q')$ are split and isomorphic to the motive of ${\mathbb P}^1_F$. For $p=2$, condition $(ii)$ implies that $M(\SB(Q))$ and $M(\SB(Q'))$ have the same Tate trace over any $2$-special field, hence the same higher Tate trace by Lemma~\ref{pspecial}, and Theorem~\ref{main.thm} finishes the proof.\\

Assume now that 
$n\geq 2$ and consider a subset $\Theta$ of $\Delta(G)$ such that $\Theta\supset \Theta_0$.
If $\Theta$, hence $\Theta_0$, are empty, then $X_{\Theta,G}$ and $X_{\varphi(\Theta),G'}$ are both isomorphic to $\Spec F$, and their motives are isomorphic. 
If $\Theta$ coincides with $\Delta(G)$, that is $X_{\Theta,G}$ and $X_{\varphi(\Theta),G'}$ are the respective Borel varieties of $G$ and $G'$, condition $(ii)$ implies that $G$ and $G'$ are split over the same $p$-special fields. Hence they satisfy condition $(ii)$ of Corollary~\ref{critborel}, and again they have isomorphic motives. 
Therefore, we may assume $\Theta$ and $\Delta(G)\backslash \Theta$ are not empty. Let $L/F$ be a $p$-special field extension such that  $X_{\Theta,G_L}$ has a rational point. From Chernousov, Gille, Merkurjev motivic decompositions of isotropic projective homogeneous varieties (see \cite{CGM}, \cite[Theorem 7.4]{Bros}), there is an isomorphism $$M(X_{\Theta,G_L})\simeq \bigoplus_{\delta}M(Z_{\delta,G_{L,\Theta}})\{l(\delta)\}$$ where $G_{L,\Theta}$ is the semi-simple part of a Levi component of $G_L$ associated to $\Theta$, the $\delta$'s are the minimal length cosets representatives for some double cosets of the Weyl group of $G$, and $l(.)$ is the length. This data is uniquely determined by the combinatorics of $G$ and does not depend on the choice of a set of simple roots of $G$.\\

The semi-simple algebraic groups $G_{L,\Theta}$ and $G'_{L,\varphi(\Theta)}$ are inner forms of the same split group, 
with rank $r$ satisfying $1\leq r\leq n-1$ : their Dynkin diagrams are obtained from the ones of $G$ and $G'$ by removing the subsets $\Theta$, $\varphi(\Theta)$, respectively. By assumption $(ii)$, for any $p$-special field extension $K/L$, $\varphi$ thus identifies the Tits indexes of $G_K$ and $G'_K$. Induction hypothesis with $\Theta_0=\emptyset$ then gives that $G_{L,\Theta}$ and $G'_{L,\Theta}$ are motivic equivalent mod $p$, so that
$$M(X_{\Theta,G_L})\simeq \bigoplus_{\delta}M(Z_{\delta,G_{L,\Theta}})\{l(\delta)\}\simeq \bigoplus_{\delta}M(Z_{\delta,G'_{L,\varphi(\Theta)}})\{l(\delta)\}\simeq M(X_{\varphi(\Theta),G'_L}).$$
It follows from this isomorphism that $X_{\Theta,G_L}$ and $X_{\varphi(\Theta),G'_L}$ have isomorphic Tate traces for all $p$-special fields $L/F$. Casting Theorem \ref{main.thm}, the motives of $X_{\Theta,G}$ and $X_{\varphi(\Theta),G'}$ are isomorphic as well.
\end{proof}
In the special case $\Theta_0=\emptyset$,  we get the following statement, which is a reformulation of~\cite[Thm. 16]{dC}. 
\begin{cor}
Two semi-simple algebraic groups $G$ and $G'$, inner forms of the same split group, are motivic equivalent mod $p$ relatively to an isomorphism of $\varphi:\Delta(G)\longrightarrow \Delta(G')$ if and only if $\varphi$ induces an isomorphism between the Tits indexes of $G_L$ and $G'_L$, for any $p$-special field extension $L/F$. 
\end{cor}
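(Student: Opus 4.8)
The plan is to derive the corollary directly from Theorem~\ref{motequivpart} by specializing $\Theta_0=\emptyset$. First I observe that with $\Theta_0=\emptyset$, every $\Gal(F)$-invariant subset $\Theta\subset\Delta(G)$ satisfies $\Theta\supset\Theta_0$, so condition~(i) of the theorem reads: for all $\ast$-invariant $\Theta$, the motives $M(X_{\Theta,G})$ and $M(X_{\varphi(\Theta),G'})$ are isomorphic in $\M_F(\Lambda)$. By the definition of motivic equivalence with coefficients in $\Lambda$, this is exactly the assertion that $G$ and $G'$ are motivic equivalent with coefficients in $\Lambda$ relatively to $\varphi$; one only has to note that $\varphi$ is already assumed compatible with the $\ast$-actions, which is part of the hypothesis of Theorem~\ref{motequivpart}, and that the bijection $\Theta\leftrightarrow X_{\Theta,G}$ between $\ast$-invariant subsets and projective homogeneous varieties is the one recalled in Section~\ref{application}.

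Next I treat condition~(ii) in the case $\Theta_0=\emptyset$. The empty set is always distinguished in any Dynkin diagram (the variety $X_{\emptyset,G}=\Spec F$ has a rational point), so the first clause of~(ii) is vacuously true for every $p$-special extension $E/F$, as is the corresponding clause for $\varphi(\Theta_0)=\emptyset$. Hence~(ii) reduces to: for every $p$-special field extension $E/F$, $\varphi$ induces a bijection between $\delta_0(G_E)$ and $\delta_0(G'_E)$. Since $\varphi$ is an isomorphism of Dynkin diagrams compatible with the $\ast$-actions and $\delta_0$ is a union of $\ast$-orbits, such a bijection is precisely the statement that $\varphi$ identifies the Tits index of $G_E$ with that of $G'_E$ (the Tits index being the datum of the Dynkin diagram with $\ast$-action together with the distinguished subset $\delta_0$). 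This is the phrasing used in the corollary, so the two conditions match verbatim once unwound.

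Therefore the corollary is simply the equivalence (i)$\Leftrightarrow$(ii) of Theorem~\ref{motequivpart} restated for $\Theta_0=\emptyset$, with both sides rewritten in the standard terminology of motivic equivalence and Tits indexes. There is no real obstacle here: the only point requiring a word of care is checking that the two bookkeeping conventions — ``motivic equivalence relatively to $\varphi$'' on one side, and ``$\varphi$ identifies the Tits indexes over all $p$-special $E/F$'' on the other — are literal translations of conditions~(i) and~(ii), which they are. I would also remark, as the excerpt does, that this recovers \cite[Thm.~16]{dC} but with a proof independent of \emph{loc. cit.}, since Theorem~\ref{motequivpart} rests on Theorem~\ref{main.thm} and the Chernousov--Gille--Merkurjev decompositions rather than on the arguments of~\cite{dC}.
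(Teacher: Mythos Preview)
Your proposal is correct and follows exactly the paper's approach: the paper itself introduces the corollary with the sentence ``In the special case $\Theta_0=\emptyset$, we get the following statement,'' and gives no further argument. You have simply spelled out carefully why both conditions (i) and (ii) of Theorem~\ref{motequivpart} specialize to the two sides of the corollary when $\Theta_0=\emptyset$, which is precisely what is intended.
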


\section{Motivic splitting patterns}

Recall that the splitting pattern of a quadratic form $q$ over $F$ is defined as the set of values taken by the Witt index of $q$ over all field extensions, see~\cite{HR}, ~\cite[\S 25]{EKM}. It is thus a finite set of integers, bounded by half the dimension of $q$. As shown by Knebusch~\cite{Kne1}, the splitting pattern of $q$ is obtained considering a \emph{generic splitting tower} of $q$. More precisely, the splitting pattern is equal to the set of Witt indices $\{i_w(q_{F_0}),\dots, i_w(q_{F_h})\}$, where $F_0=F$, for $i\geq 1$, $F_i$ is the function field of the anisotropic part of $q_{F_{i-1}}$ and $h$ is the height of $q$, that is the smallest integer such that $q_{F_{h}}$ has anisotropic part of dimension at most $1$.

We define a motivic version of the splitting pattern for projective homogeneous varieties and their direct summands as follows: 
\begin{defi}
\label{splittingpattern.def}
Let $X$ be a projective homogeneous variety for some $p$-inner semi-simple group $G$ and let $M$ be a direct summand of $M(X)$ in $\M_F(\mz/p\mz)$. We say that a pure Tate motive $T$ belongs to the splitting pattern of $M$ if there is a field extension $K/F$ such that $T_K$ is isomorphic to the Tate trace of $M_K$. The set of such pure Tate motives is the motivic splitting pattern of $M$.
\end{defi}

\begin{ex}
Let $Q$ be a projective quadric defined by a $2m$ dimensional quadratic form $q$. In view of Example~\ref{qf.ex}, the motivic splitting pattern of $Q$ with coefficients in $\mz/2\mz$ consists of the pure Tate motives 
\[\sum_{k=0}^{i-1}\,\biggl(\mz/2\mz\{k\}+\mz/2\mz\{2m-2-k\}\biggr),\]

where $i$ runs through the (classical) splitting pattern of the quadratic form $q$. 
\end{ex}

The motivic splitting pattern of $M$ may be described using partial splitting fields, as we now proceed to show. 
Let $M$ be a direct summand of the motive of a projective homogeneous variety over $F$ for a $p$-inner semi-simple group $G$, with coefficients in $\mz/p\mz$.
Fix a splitting family $\mathcal{C}_M=\{F(X^0_1),...,F(X^0_{r_0})\}$ of $M$, where the varieties $X^0_k$ are quasi-$G$-homogeneous varieties (see Example \ref{phv.ex}).

If $M$ is not a pure Tate motive, there exists an integer $k_0$ such that $X^0_{k_0}$ is anisotropic mod $p$. Let $F_0=F(X^0_{k_0})$. 
The Tate trace of the motive $M_{F_0}$ has greater rank than the Tate trace of $M$. Consider a splitting family $\mathcal{C}_{M_{F_0}}=\{F_0(X^1_1),...,F_0(X^1_{r_1})\}$ of $M_{F_0}$. If $M_{F_0}$ is not pure Tate, we pick again an anisotropic $X^1_{k_1}$ and let $F_1=F_0(X^1_{k_1})$. After a finite number of such field extensions, the motive $M$ becomes a pure Tate motive, as the rank of the Tate trace increases at each step. 
The corresponding sequence of field extensions $$F \hookrightarrow F_0 \hookrightarrow F_1 \hookrightarrow ... \hookrightarrow F_s$$
is called a splitting tower of $M$. 

\begin{rem}
Let $Q$ be a projective quadric, defined by a quadratic form $q$. Assume $q$ is non-split, and denote by $Q_{\mathrm{an}}$ the quadric defined by its anisotropic part. By~\cite{Rost}, see also Example~\ref{qf.ex}, the motive $M(Q)$ with coefficients in $\mz/2\mz$ contains a shift of $M(Q_{\mathrm{an}})$, hence of its upper motive $U_{Q_{\mathrm{an}}}$, as a direct summand. Therefore, we may take $X^0_{k_0}=Q_{\mathrm{an}}$ in the process above. Iterating the process, this proves that the generic splitting tower of $q$ introduced by Knebusch in~\cite{Kne1} is a splitting tower for the motive $M(Q)$.
\end{rem}

\begin{prop}
Let $G$ be a semi-simple group of inner type and $X$ a projective $G$-homogeneous variety. Consider a direct summand $M$ of the motive $M(X)$ in $\M_{F}({\mz/p\mz})$. 
A pure Tate motive $T$ belongs to the motivic splitting pattern of $M$ if and only if $T_E$ is isomorphic to the Tate trace of $M_E$, for some field extension $E/F$ appearing in a splitting tower of $M$.
\end{prop}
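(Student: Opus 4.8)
The plan is to handle the two implications separately. The forward one is a formality: any field $E$ occurring in a splitting tower of $M$ is, by construction, a field extension of $F$, so an isomorphism $T_E\simeq\Tr(M_E)$ is precisely what Definition~\ref{splittingpattern.def} asks for $T$ to lie in the motivic splitting pattern of $M$.

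For the converse I would argue by induction on the defect $d(M):=\rk(M)-\rk\bigl(\Tr(M_F)\bigr)$, which also bounds the length of every splitting tower of $M$. Fix a decomposition $M\simeq\sum_k U_{X_k}\{i_k\}$ into initial motives, giving a $p$-splitting family of $M$. Because the Tate trace is additive over finite direct sums (Krull--Schmidt), $\rk\bigl(\Tr(M_L)\bigr)=\sum_k\rk\bigl(\Tr((U_{X_k})_L)\bigr)$ for every $L/F$; and, using Lemma~\ref{isotropy} together with the fact that $U_{X_k}$ is the unique indecomposable summand of $M(X_k)$ of hook $0$, one sees that $\Tr\bigl((U_{X_k})_L\bigr)\neq0$ exactly when $X_k$ is isotropic over $L$, while $X_k$ isotropic over $F$ forces $U_{X_k}\simeq\Lambda\{0\}$. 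Suppose $T_K\simeq\Tr(M_K)$ for some $K/F$. If $\rk\bigl(\Tr(M_K)\bigr)=\rk\bigl(\Tr(M_F)\bigr)$ --- in particular if $d(M)=0$ --- then $\Tr(M_F)_K$ is a pure Tate summand of $M_K$ of maximal rank, hence isomorphic to $\Tr(M_K)$, and comparing Poincar\'e polynomials yields $T\simeq\Tr(M_F)$, which is realised over $E=F$; since $F$ opens every splitting tower of $M$, we are done in this case.

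In the remaining case $\rk\bigl(\Tr(M_K)\bigr)>\rk\bigl(\Tr(M_F)\bigr)$, so the rank formula above forces some index $k_0$ with $X_{k_0}$ anisotropic over $F$ but isotropic over $K$. Set $F_1:=F(X_{k_0})$; this is exactly the first step of a splitting tower of $M$, the motive $M_{F_1}$ is again a direct summand of the motive of a projective homogeneous variety for the (still $p$-inner) group $G_{F_1}$, and $d(M_{F_1})<d(M)$ since $X_{k_0}$ becomes isotropic over its own function field. It then remains to check that $T$ still lies in the motivic splitting pattern of $M_{F_1}$; granting this, the induction hypothesis supplies a splitting tower of $M_{F_1}$ through a field $E$ with $T_E\simeq\Tr(M_E)$, and prepending the step $F\subset F_1$ produces a splitting tower of $M$ through $E$. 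To find the needed witness field over $F_1$, let $K_p$ be a $p$-special closure of $K$. As $X_{k_0}$ carries a $0$-cycle of degree $1$ with coefficients in $\Lambda$, it acquires a rational point over $K_p$ (the argument from the proof of Lemma~\ref{pspecial}) and hence is rational over $K_p$, so $L:=K_p(X_{k_0})$ is purely transcendental over $K_p$ and contains (copies of) both $F_1=F(X_{k_0})$ and $K$. By Lemma~\ref{pspecial}, $\Tr(M_K)_{K_p}\simeq\Tr(M_{K_p})$; and a purely transcendental extension induces an isomorphism on the Chow groups of all products in play, hence preserves motivic decompositions and Tate traces, so $\Tr(M_L)\simeq\Tr(M_{K_p})_L$. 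Therefore $\Tr(M_L)\simeq T_L$, and $T$ belongs to the motivic splitting pattern of $M_{F_1}$, closing the induction.

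The step I expect to be the main obstacle is exactly this field-compatibility issue: the witness $K$ lives over $F$, not over the enlarged base $F_1$, so the induction cannot literally be run ``over $K$''. The remedy is to construct a common overfield of $F_1$ and $K$ over which $M$ still has Tate trace $T$, which forces one to control the two enlargements used --- the prime-to-$p$ passage $K\subset K_p$ (handled by the $p$-special machinery of Lemma~\ref{pspecial}) and the purely transcendental passage $K_p\subset K_p(X_{k_0})$ (handled by homotopy invariance of Chow groups). Everything else is routine bookkeeping with ranks and hooks.
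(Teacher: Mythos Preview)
Your argument follows the paper's almost step for step: induction on the anisotropic rank, locating an anisotropic $X_{k_0}$ that becomes isotropic over $K$, passing to the $p$-special closure $K_p$ and then to $K_p(X_{k_0})$ to manufacture a common overfield of $F_1$ and $K$ on which the Tate trace is still $T$.

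The one step that is not justified as written is the claim ``$X_{k_0}$ acquires a rational point over $K_p$ and hence is rational over $K_p$''. Having a rational point does not make an arbitrary geometrically split variety rational, so this fails if the $X_k$ in your decomposition are taken to be arbitrary varieties underlying initial motives. The paper avoids this by insisting from the outset (as in the construction of splitting towers and Example~\ref{phv.ex}) that the $p$-splitting family consist of corestrictions $\mathrm{cor}_{L/F}(X)$ of projective $G_L$-homogeneous varieties for intermediate extensions $L$ of a fixed $p$-power extension trivialising the $\ast$-action; then a rational point on $X_{k_0}$ over $K_p$ yields a rational point on the underlying homogeneous variety, and \cite[Thm.~3.10]{KeRe} gives that $K_p(X_{k_0})/K_p$ is purely transcendental. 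Once you make this specific choice of splitting family explicit and cite Kersten--Rehmann, your proof and the paper's are the same.
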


\begin{proof}
We proceed by induction on the anisotropic rank of $M$, that is the integer $\rk(M)-\rk\bigr(\Tr(M)\bigl)$. If this is $0$, $M$ is a pure Tate motive, $\Tr(M)$ is the only element in the motivic splitting pattern of $M$ and we may take $E=F$. Assume now that $\rk(M)>\rk\bigl(\Tr(M)\bigr)$ and consider a pure Tate motive $T$ which is in the motivic splitting pattern of $M$. 
If $T=\Tr(M)$, again we may take $E=F$, so we assume $T$ has rank strictly larger than $\Tr(M)$. By definition of the motivic splitting pattern, $\Tr(M_K)$ is isomorphic to $T_K$ for some field extension $K/F$. Consider a splitting family of $M$,  $\mathcal{C}_M=\{F(X_1),...,F(X_{n})\}$, consisting of function fields of projective homogeneous varieties given by a decomposition of $M$ as a direct sum of indecomposable summands 
$M=\sum_{k=1}^{n}U_{X_k}\{i_k\}$. Since $\Tr(M_K)$ has rank strictly larger than $\Tr(M)$, there is an integer $1\leq r \leq n$ such that $X_r$ is anisotropic mod $p$ and becomes isotropic mod $p$ over $K$. Consider the following diagram of fields 
$$\xymatrix{ &K_p((X_r)_{K_p})&\\ &&K_p \ar[ul]
\\F(X_r)\ar[uur] &&  K \ar[u]
\\ & F \ar[ul] \ar[ur] }$$
where $K_p$ denotes the $p$-special closure of $K$.

Since $X_r$ is isotropic mod $p$ over $K$, it has a rational point over its $p$-special closure $K_p$ and the field extension $K_p((X_r)_{K_p})/K_p$ is purely transcendental \cite[Thm. 3.10]{KeRe}. Therefore, the Tate trace of $M$ is preserved under this extension, and combining with Lemma~\ref{pspecial}, we get that 
$\Tr(M_{K_p((X_r)_{K_p})})=T_{K_p((X_r)_{K_p})}$. In particular, $T$ belongs to the splitting pattern of $M_{F(X_r)}$. We get by induction a field extension $E/F(X_r)$ appearing in a splitting tower of $M_{F(X_r)}$ such that $T_E$ is isomorphic to the Tate trace of $M_E$. Completing this tower of extensions with $F(X_r)/F$, $E$ belongs to a splitting tower of $M$ as well.
\end{proof}

\end{document}